\gdef\SetFigFont#1#2#3#4#5{%
  \reset@font\fontsize{#1}{#2pt}%
  \fontfamily{#3}\fontseries{#4}\fontshape{#5}%
  \selectfont}%
\newcommand{\E}{{\mathbb{\,E}}}
\newcommand{\I}{{\mathbb{\,I}}}
\newcommand{\ac}{{\mathcal{A}}}
\newcommand{\bc}{{\mathcal{B}}}
\newcommand{\ec}{{\mathcal{E}}}
\newcommand{\gc}{{\mathcal{G}}}
\newcommand{\hc}{{\mathcal{H}}}
\newcommand{\Sc}{{\mathcal{S}}}
\newcommand{\Var} {\operatorname{Var}}
\newcommand{\Prob} {\mathbb {P}}
\newcommand{\prob}[1]{\Prob\left(#1\right)}
\newcommand{\Pc}[2]{\prob{#1\,|\,#2}}
\newcommand{\eps} {\varepsilon}
\newcommand{\fee} {\varphi}
\newcommand{\R} {\mathbb R}
\newcommand{\Z} {\mathbb Z}
\newcommand{\hh} {\mathbb H}
\newcommand{\e}{\mathrm{e}}
\newcommand{\Bi}{\operatorname{Bi}}
\newcommand{\X}{\mathbf{X}}
\newcommand{\x}{\mathbf{x}}
\newcommand{\Y}{\mathbf{Y}}
\newcommand{\y}{\mathbf{y}}
\newcommand{\z}{\mathbf{z}}
\newcommand{\green}{\operatorname{green}}
\newcommand{\degb}{\deg_{\green}}
\newcommand{\red}{\operatorname{red}}
\newcommand{\sw}{\operatorname{Sw}}
\newcommand{\Hn}[1]{\hh^{(k)}(n,#1)}
\newcommand{\Hnd}{\Hn{d}}
\newcommand{\THnd}{\tilde \hh^{(k)}(n,d)}
\newcommand{\Hnm}{\Hn{m}}
\newcommand{\Hnp}{\Hn{p}}
\newcommand{\hcnd}{\hc^{(k)}(n,d)}
\newcommand{\PHn}[1]{\hh_*^{(k)}(n,#1)}
\newcommand{\PHnd}{\PHn{d}}
\newcommand{\tvd}{\operatorname{d}_{TV}}
\newcommand{\hyp}{\mathsf H}
\newtheorem{theorem}{Theorem}
\newtheorem{lemma}[theorem]{Lemma}
\newtheorem{corollary}[theorem]{Corollary}
\newtheorem{proposition}[theorem]{Proposition}
\newtheorem{conjecture}{Conjecture}
\theoremstyle{remark}
\newtheorem{remark}{Remark}
\title{Loose Hamilton Cycles in Regular Hypergraphs\thanks{MSC2010 codes: 05C65, 05C80, 05C45.}}
\author{\normalsize ANDRZEJ DUDEK{$^1$}\thanks{Supported in part by Simons Foundation Grant \#244712.} \and \normalsize ALAN FRIEZE{$^{2}$}\thanks{Supported in part by NSF grant {CCF1013110}.} \and
\normalsize ANDRZEJ RUCI\'NSKI{$^{3}$}\thanks{Research supported by the Polish NSC grant N201 604940. Part of research performed at Emory University, Atlanta.} \and
\normalsize MATAS \v{S}ILEIKIS{$^{4}$}\thanks{Research supported by the Polish NSC grant N201 604940. Part of research performed at Adam Mickiewicz University, Pozna\'n.} \\
\small {$^1$}Department of Mathematics, Western Michigan University, Kalamazoo, MI \\
\small \texttt{andrzej.dudek@wmich.edu} \\
\small {$^2$}Department of Mathematical Sciences, Carnegie Mellon University, Pittsburgh, PA\\
\small \texttt{alan@random.math.cmu.edu} \\
\small {$^3$}Department of Discrete Mathematics, Adam Mickiewicz University, Pozna\'n, Poland\\
\small \texttt{rucinski@amu.edu.pl} \\
\small {$^4$}Department of Mathematics, Uppsala University, Sweden\\
\small \texttt{matas.sileikis@math.uu.se}
}
\begin{document}
\date{}
  \maketitle
  \begin{abstract}
    We establish a relation between two uniform models of random $k$-graphs (for constant $k \ge 3$) on $n$ labeled vertices: $\Hnm$, the random $k$-graph with exactly $m$ edges, and $\Hnd$, the random $d$-regular $k$-graph. By extending to $k$-graphs the switching technique of McKay and Wormald, we show that, for some range of $d=d(n)$ and a constant $c>0$, if $m \sim cnd$, then one can couple $\Hnm$ and $\Hnd$ so that the latter contains the former  with proba\-bility tending to one as $n\to\infty$. In view of known results on the existence of a loose Hamilton cycle in $\Hnm$, we conclude that $\Hnd$ contains a loose Hamilton cycle when $d \gg \log n$ (or just $d \ge C \log n$, if $k = 3$) and $d = o(n^{1/2})$.
  \end{abstract}
  \section{Introduction}
  A \emph{$k$-uniform hypergraph} (or \emph{$k$-graph} for short) on a vertex set $V = \{1, \dots, n\}$ is a family of $k$-element subsets of $V$.
  A $k$-graph $H = (V,E)$ is $d$-regular, if the degree of every vertex is $d$:
  \begin{equation*}
    \deg(v) := |\left\{ e \in E : v \in e \right\}| = d, \quad v = 1, \dots, n.
  \end{equation*}
  Let $\hcnd$ be the family of all such graphs.
  Further we tacitly assume that $k$ divides $nd$.
  By $\Hnd$ we denote the \emph{regular} random graph, which is chosen uniformly at random from $\hcnd$. Let
  \begin{equation*}
    M := nd/k
  \end{equation*}
  stand for the number of edges of $\Hnd$.

  Let us recall two more standard models of random $k$-graphs on $n$ vertices. For $p \in [0,1]$, the \emph{binomial} random $k$-graph $\Hnp$ is a random $k$-graph obtained by including every of the $\binom n k$ possible edges with probability $p$ independently of others. For integer $m \in [0,\binom n k]$, the \emph{uniform} random graph $\Hnm$ is chosen uniformly at random among $k$-graphs with precisely $m$ edges.

  We study the behavior of random $k$-graphs as $n \to \infty$. Parameters $d, m, p$ are treated as functions of $n$. We use the asymptotic notation $O(\cdot), o(\cdot), \Theta(\cdot), \sim$ (as it is defined in, say, \cite{K}), with respect to $n$ tending to infinity and assume that implied constants may depend on $k$.
  Given a sequence of events $(\ac_n)$, we say that $\ac_n$ happens \emph{asymptotically almost surely} (\emph{a.a.s.}) if $\prob{\ac_n} \to 1$, as $n \to \infty$.

  The main result of the paper is that we can couple $\Hnd$ and $\Hnm$ so that the latter is a subgraph of the former a.a.s.\
\begin{theorem}
  \label{thm:main}
  For every $k \ge 3$, there are positive constants $c$ and $C$ such that if $d \ge C \log n$, $d = o(n^{1/2})$ and $m = \lfloor cM \rfloor = \lfloor cnd/k \rfloor$, then one can define a joint distribution of random graphs $\Hnd$ and $\Hnm$ in such a way that
  \begin{equation*}\label{eq:embedding}
    \Hnm \subset \Hnd \qquad \text{a.a.s.\ }
  \end{equation*}
\end{theorem}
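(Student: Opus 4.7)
The plan is a two-step coupling. First, sample $\Hnd$ and let $H'$ be a uniform random $m$-edge subset of its $M$ edges; by construction $H' \subset \Hnd$ always. Let $\mu'$ denote the (marginal) distribution of $H'$ and let $\mu$ denote the law of $\Hnm$, i.e., the uniform distribution on $m$-edge $k$-graphs on $\{1,\dots,n\}$. If we can prove that $\tvd(\mu', \mu) = o(1)$, then a standard coupling lemma yields a joint law on $(H', \Hnm)$ with $H' = \Hnm$ a.a.s., and composing with the first step gives the desired coupling with $\Hnm \subset \Hnd$ a.a.s.

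For a fixed $m$-edge $k$-graph $G$ on $\{1,\dots,n\}$, let $N(G)$ denote the number of $k$-graphs in $\hcnd$ that contain $G$. Then
\[
\mu'(G) = \frac{N(G)}{|\hcnd|\,\binom{M}{m}}, \qquad \mu(G) = \frac{1}{\binom{\binom{n}{k}}{m}},
\]
so that
\[
\frac{\mu'(G)}{\mu(G)} = \frac{N(G)}{|\hcnd|} \cdot \frac{\binom{\binom{n}{k}}{m}}{\binom{M}{m}}.
\]
Note that if $\Hnd$ were replaced by the uniform $M$-edge model $\Hn{M}$, then $N(G)/|\hcnd|$ would equal $\binom{M}{m}/\binom{\binom{n}{k}}{m}$ exactly and the ratio would be $1$. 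Hence the task reduces to showing that $N(G)/|\hcnd|$ deviates from this target by only a factor $1+o(1)$ for all but a negligible $\mu$-mass of $G$, i.e., that the regularity constraint imposes only a lower-order bias on the event $\{G \subset \Hnd\}$ for a typical target $G$.

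The technical core is this estimate of $N(G)/|\hcnd|$, and here I would adapt the switching method of McKay and Wormald to $k$-graphs. The elementary switching operation takes two edges $e, f$ of a $d$-regular $k$-graph and produces $e', f'$ by interchanging a single vertex between them; this preserves all vertex degrees. By counting ordered chains of switches that ``install'' the edges of $G$ one at a time into a random $d$-regular graph, one can compare the number of $d$-regular $k$-graphs containing a given partial edge set with the number containing one fewer edge, and iterate through all $m$ edges of $G$. The lower bound $d \gg \log n$ (or $d \ge C \log n$ for $k = 3$) enters via Chernoff-type concentration to show that the vertex degrees in $\Hnm$ lie within $O(\sqrt{cd})$ of their mean $cd$, thereby defining the class of typical $G$ to which the switching estimate must apply uniformly.

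The main obstacle I anticipate is the combinatorial bookkeeping of these switchings. At each step, one must estimate both the number of admissible switches (those producing legitimate $k$-graphs, with no repeated vertices inside an edge and no repeated edges) and the number of ``bad'' switches (those colliding with edges of $G$ already installed, creating forbidden multi-edges, or over-saturating a vertex); these counts must be controlled uniformly over the typical class of $G$. Since one performs $\Theta(nd)$ switching steps, the multiplicative errors must accumulate to only $1+o(1)$, which becomes tight as $d$ approaches $n^{1/2}$---and the hypothesis $d = o(n^{1/2})$ is precisely the regime in which switching analyses retain a small relative error. Once the switching estimate is in place and atypical $G$ are discarded, the coupling lemma concludes the proof.
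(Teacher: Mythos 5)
Your reduction fails at its very first step: for a \emph{fixed} constant $c>0$ the law $\mu'$ of a uniformly random $m$-subset $H'$ of the edges of $\Hnd$ is not close to the law $\mu$ of $\Hnm$; in fact $\tvd(\mu',\mu)\to 1$, so no coupling of the form ``$H'$ uniform given $\Hnd$'' can work. The obstruction is the degree distribution. Given $\Hnd$, the degree of a fixed vertex in $H'$ is hypergeometric (choose $m$ of the $M$ edges, $d$ of which contain $v$), with mean $\sim cd$ and variance $\sim c(1-c)d$; in $\Hnm$ the degree has mean $\sim cd$ but variance $\sim cd$. Hence the statistic $T=\sum_{v}(\deg(v)-cd)^2$ has expectation $\sim c(1-c)dn$ under $\mu'$ and $\sim cdn$ under $\mu$, a gap of order $c^2dn$, while bounded-differences arguments (of the same kind as \eqref{eq:permconc}) show $T$ fluctuates by only $O(d\sqrt{n}\log n)$ in either model; so $T$ separates the two laws a.a.s. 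Equivalently, your key claim that $N(G)/|\hcnd|$ is within a factor $1+o(1)$ of $\binom{M}{m}/\binom{\binom nk}{m}$ for $\mu$-typical $G$ is false: the regular host penalizes the binomial-scale degree fluctuations of a typical $G\sim\Hnm$ by a factor $e^{-\Theta(n)}$ (the identity $\sum_G N(G)=|\hcnd|\binom Mm$ only says the ratio averages to $1$; the average is carried by atypical $G$ with abnormally flat degree sequences). A secondary problem is quantitative: even for $G$ in a suitable typical class, estimating $N(G)$ to relative precision $1+o(1)$ by switching in the $m=\Theta(nd)$ edges one at a time accumulates per-step relative errors of order at least $d/n$ over $\Theta(nd)$ steps, i.e.\ a factor $e^{\Theta(d^2)}$, hopeless for $d\ge C\log n$.

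The paper avoids both issues by never conditioning on $\Hnd$ and selecting a subgraph afterwards. Instead it couples, coordinate by coordinate and using auxiliary fair coins, an i.i.d.\ sequence $\X$ (from which $\Hnm$ is read off) with the degree-constrained sequence $\Y$ defining the multigraph $\PHnd$, so that a.a.s.\ $\Hnm$ sits inside the ``red'' initial segment of $\Y$ (Lemma \ref{lem:embed}); the edges of $\Hnd$ outside the embedded copy are exactly what absorbs the mismatch in degree fluctuations, and the set of host edges is emphatically not a uniform $m$-subset. Switchings are then used only to destroy the $O(L)=O(n^{1/4}d^{1/2})$ loops of $\PHnd$ while preserving the red edges, so the multiplicative errors $1+O((L+d^2)/M)$ compound over at most $L$ steps to $1+o(1)$, yielding near-uniformity of the resulting simple regular graph (Lemma \ref{lem:almostuniform}) and a maximal coupling with $\Hnd$. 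If you want to salvage your plan, the subset of $\Hnd$ hosting $\Hnm$ must be chosen in a degree-aware (non-uniform) way, which is essentially what the sequence coupling accomplishes.
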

\noindent To prove Theorem \ref{thm:main}, we consider a generalization of a $k$-graph that allows loops and multiple edges.
By a \emph{$k$-multigraph} on the vertex set $[n]$ we mean a multiset of $k$-element multisubsets of $[n]$. An edge is called  a \emph{loop} if it contains more than one copy of some vertex and otherwise it is called \emph{a~proper edge}. %As usual, we assume that $V = [n]$.

The idea of the proof and the structure of the paper are as follows. In Section~\ref{sec:prelim} we generate two models of random $k$-multigraphs by drawing random sequences from~$[n]$ and cutting them into consecutive segments of length $k$. By accepting an edge only if it is not a loop and does not coincide with a previously accepted edge, after $m$ successful trials we obtain $\Hnm$. On the other hand, by allowing $d$ copies of each vertex, and accepting every edge, after $dn/k$ steps we obtain a $d$-regular $k$\mbox{-}multigraph $\PHnd$. Then we show that $\PHnd$ a.a.s.\  has no multiple edges and relatively few loops.
In Section \ref{sec:embedding} we couple the two random processes in such a way that $\Hnm$ is a.a.s.\  contained in an initial segment of $\PHnd$, which we call \emph{red}.
In Section \ref{sec:redloops} we eliminate at once all red loops of $\PHnd$ by swapping them with randomly selected non-red (\emph{green}) proper edges. Finally, in Section \ref{sec:greenloops}, we eliminate the green loops one by one using a certain random procedure (called switching) which does not destroy the previously embedded copy of $\Hnm$ and, at the same time, transforms $\PHnd$ into a $k$-graph $\THnd$, which is distributed approximately as $\Hnd$, that is, almost uniformly. Theorem \ref{thm:main} follows by a (maximal) coupling of $\THnd$ and $\Hnd$.

A consequence of Theorem \ref{thm:main} is that $\Hnd$ inherits from $\Hnm$ properties that are increasing, that is to say, properties that are preserved as new edges are added. An example of such a property is hamiltonicity, that is, containment of a Hamilton cycle.

A \emph{loose Hamilton cycle} on $n$ vertices is a set of edges $e_1, \dots, e_l$ such that for some cyclic order of the vertices every edge $e_i$ consists of $k$ consecutive vertices, and $|e_i \cap e_{i+1}| = 1$ for every $i \in [l]$, where $e_{l+1} := e_1$. A necessary condition for the existence of a loose Hamilton cycle on $n$ vertices is $(k-1)|n$, which we will assume whenever relevant.

  The history of hamiltonicity of regular graphs is rich and exciting (see \cite{W99}). However, we state only the final results here. Asymptotic hamiltonicity was proved by Robinson and Wormald \cite{RW94} in 1994 for any fixed $d \ge 3$, by Krivelevich, Sudakov, Vu and Wormald \cite{KSVW} in 2001 for $d \ge n^{1/2} \log n$, and by Cooper, Frieze and Reed \cite{CFR} in~2002 for $C \le d \le n/C$ and some large constant $C$.

  The threshold for existence of a loose Hamilton cycle in $\Hnp$ was determined by Frieze \cite{F} (for $k=3$) as well as Dudek and Frieze \cite{DF} (for $k \ge 4$) under a divisibility condition $2(k-1)|n$, which was relaxed to $(k -1)| n$ by Dudek, Frieze, Loh and Speiss~\cite{DFLS}.

  However, we formulate these results for the model $\Hnm$, such a possibility being provided to us by the asymptotic equivalence of models $\Hnp$ and $\Hnm$ (see, e.g., Corollary 1.16 in \cite{JLR}).
  \begin{theorem}[\cite{F}, \cite{DFLS}]
    \label{thm:hamHnm3}
    There is a constant $C > 0$ such that if $m \ge Cn\log n$, then
  \begin{equation*}
    \hh^{(3)}(n,m) \text{ contains a loose Hamilton cycle a.a.s.\ }
    %\lim_{\substack{n\to \infty \\ 2 | n}} \prob{\hh^{(3)}(n,m) \text{ contains a loose Hamilton cycle}} = 1.
  \end{equation*}
  \end{theorem}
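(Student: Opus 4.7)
The plan is to transfer to the binomial model and apply the absorbing method. By the asymptotic equivalence of $\hh^{(3)}(n,p)$ and $\hh^{(3)}(n,m)$ recalled earlier in the excerpt (Corollary 1.16 in \cite{JLR}), it suffices to show that $\hh^{(3)}(n,p)$ a.a.s.\ contains a loose Hamilton cycle whenever $p = \Theta(\log n / n^2)$. This density is essentially best possible, since for $p \ll \log n / n^2$ isolated vertices a.a.s.\ exist and obstruct hamiltonicity.

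I would split the randomness by sprinkling, choosing $p_1, p_2, p_3$ of the same order with $(1-p_1)(1-p_2)(1-p_3) = 1 - p$, and revealing three independent binomial sub-$3$-graphs $H_1, H_2, H_3$ in turn. From $H_1$ I would select a small random \emph{reservoir} $R \subset [n]$ of size $\eps n$ (for a small fixed $\eps > 0$) and then build an \emph{absorbing loose path} $P_A \subset H_1$ on $\eps n$ vertices disjoint from $R$, with the property that for every set $S \subset [n] \setminus (V(P_A) \cup R)$ of size at most $\eps^2 n$ there is a loose path on $V(P_A) \cup S$ having the same endpoints as $P_A$. Such a $P_A$ exists a.a.s., because every vertex typically lies in $\Omega(\log^2 n)$ local \emph{absorber gadgets} (short loose paths that can be inserted into $P_A$ to swallow the vertex), and a greedy construction combined with Chernoff-type concentration produces an absorber able to cope with any admissible leftover set.

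From $H_2$ I would construct a long loose path $Q$ in $[n] \setminus (V(P_A) \cup R)$ covering all but a leftover set $L$ of size $\le \eps^2 n$. This is the long-path step and would be carried out by a P\'osa-type rotation--extension argument adapted to loose $3$-graph paths: whenever the current path cannot be extended, a rotation at an endpoint produces exponentially many alternative endpoints, and a further random edge then extends the path a.a.s. Finally, invoking the absorbing property from Step~1 I would absorb $L$ together with the unused part of $R$ into $P_A$, and use a handful of edges from $H_3$ to splice $P_A$ and $Q$ into a single loose Hamilton cycle.

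The main obstacle is the long-path step: at the critical density $p = \Theta(\log n / n^2)$ the expected number of edges only just suffices to avoid isolated vertices, so the rotation--extension argument must cope with a non-negligible set of vertices of abnormally low degree. These atypical vertices are controlled by setting them aside at the outset and absorbing them through $P_A$ and $R$ rather than processing them via rotation; this refinement is standard in modern P\'osa-based arguments but is the most delicate part of the proof.
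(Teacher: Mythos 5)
You should first note a point of orientation: the paper does not prove Theorem \ref{thm:hamHnm3} at all --- it is quoted from \cite{F} and \cite{DFLS}, and those papers do not use absorption plus rotation--extension. Frieze's argument for $k=3$ (and its refinement in \cite{DFLS}) reduces the existence of a loose Hamilton cycle to the existence of perfect matchings in auxiliary random hypergraph structures and then invokes the Johansson--Kahn--Vu theorem; that deep input is what makes the argument work at the threshold density $m=\Theta(n\log n)$, i.e.\ $p=\Theta(\log n/n^2)$, where the expected vertex degree is only $\Theta(\log n)$. Your opening reduction to the binomial model via the equivalence of $\hh^{(3)}(n,p)$ and $\hh^{(3)}(n,m)$ (Corollary 1.16 in \cite{JLR}) is fine, since containment of a loose Hamilton cycle is monotone; everything after that is a different route, and in its present form it has real gaps rather than being a compressed version of a known proof.

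The two gaps are the long-path step and the absorbing lemma. P\'osa rotation--extension is a graph technique: ``rotating'' at an endpoint of a loose path in a $3$-graph does not in general yield another loose path (the pivot edge gets split in a way that is not an edge), there is no established rotation theory for loose hypergraph paths, and the claim that each rotation produces exponentially many new endpoints is unsupported at a density where degrees are $\Theta(\log n)$ and a non-negligible set of vertices has degree $O(1)$; relegating those vertices to the reservoir does not rescue the booster/expansion counting that rotation arguments need. The absorbing step is likewise asserted rather than proved: for $P_A$ to absorb an \emph{arbitrary} set of size up to $\eps^2 n$ you need each vertex to have many absorbers embedded vertex-disjointly in $P_A$ and robust against the adversarial choice of the leftover set, not merely to lie in $\Omega(\log^2 n)$ gadgets; the gadget counts are sums of dependent indicators, so ``Chernoff-type concentration'' does not apply as stated (one needs Janson-type bounds at least); and at $p=\Theta(\log n/n^2)$ no standard construction of a linear-size absorber with this strength is known --- which is exactly why the published proofs at this density go through perfect-matching results instead of absorption. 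To make your outline into a proof you would need, at minimum, a worked-out replacement for rotation (for instance, reducing the near-spanning path cover to a matching statement) and a quantitative absorbing lemma valid at the threshold density; neither is routine.
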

  \begin{theorem}[\cite{DF}, \cite{DFLS}]
    \label{thm:hamHnmk}
    Let $k \ge 4$. If $n \log n = o(m)$, then
  \begin{equation*}
    \Hnm \text{ contains a loose Hamilton cycle a.a.s.\ }
    %\lim_{\substack{n\to \infty \\ (k-1)|n}}\prob{\Hnm \text{ contains a loose Hamilton cycle}} = 1.
  \end{equation*}
  \end{theorem}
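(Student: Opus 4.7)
The plan is to transfer the problem from the uniform model $\Hnm$ to the binomial one via the standard asymptotic equivalence (Corollary~1.16 in \cite{JLR}): it suffices to prove that $\Hnp$ a.a.s.\ contains a loose Hamilton cycle when $p = m/\binom{n}{k}$, which under our hypothesis $n\log n = o(m)$ satisfies $p\,n^{k-1} = \omega(\log n)$. In this regime every vertex has expected degree $\omega(\log n)$, and this is the correct hamiltonicity regime (up to the trivial isolated-vertex obstruction at $p = \log n / n^{k-1}$).

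In $\Hnp$ I would use a two-round \emph{sprinkling} argument. Write $p = p_1 + p_2 - p_1 p_2$ with $p_1 = (1-\eps)p$ and $p_2 = \eps p$ for a small constant $\eps$, so that $\Hnp$ is distributed as the union of independent copies of $\hh^{(k)}(n,p_1)$ and $\hh^{(k)}(n,p_2)$. In the first round, using only $p_1$-edges, I would build a loose path $P$ that misses at most a small residual set $R$ of size $o(n)$. In the second round, the fresh $p_2$-edges are used to absorb the vertices of $R$ into $P$ and to close $P$ into a loose Hamilton cycle.

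The long loose path can be produced by a Pósa-type rotation argument lifted to $k$-graphs: one maintains a loose path of current maximum length and, whenever it cannot be directly extended at an endpoint, performs rotations to produce polynomially many alternative endpoints, each of which is likely to admit an extension because a typical vertex has $\omega(\log n)$ incident $p_1$-edges. The closing and absorbing steps rely on pre-planting in the first round a collection of \emph{absorbing} sub-paths---short loose segments that can be locally re-routed to incorporate an arbitrary extra vertex---and then showing that a.a.s.\ the $p_2$-edges activate one such absorber for every vertex of $R$, while simultaneously supplying an edge that joins the two rotated endpoints through the last few unused vertices.

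The main obstacle is the absorption/closing step. The rotation argument itself is a fairly direct lift of the classical graph construction, using Chernoff concentration of vertex degrees together with expansion of small sets in $\Hnp$. The absorbing step, however, requires a careful union bound over all possible residual sets $R$ of size $o(n)$, combined with a second-moment estimate showing that each vertex has polynomially many absorbers of the right type already inside the first-round hypergraph, and that $\eps p$ is large enough to activate them. Balancing the size of $R$ against the size of the absorbing gadgets and the sparsity of the second round---so that no vertex is left unabsorbed and the two endpoints can still be glued---is where most of the technical work is concentrated.
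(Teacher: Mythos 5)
First, note what the paper itself does with this statement: it is an imported result, quoted from \cite{DF} and \cite{DFLS}, which prove the corresponding statement for the binomial model $\Hnp$; the only argument the paper supplies is the transfer from $\Hnp$ to $\Hnm$ via the standard asymptotic equivalence of the two models (Corollary 1.16 in \cite{JLR}), which is legitimate since containing a loose Hamilton cycle is a monotone increasing property. Your first step (moving to $\Hnp$ with $p = m/\binom{n}{k}$) is therefore the same as the paper's, but everything after that is an attempt to reprove the binomial result from scratch, and it is here that the proposal diverges from the actual proofs: Frieze \cite{F} and Dudek--Frieze \cite{DF} (and \cite{DFLS} for the divisibility relaxation) do not use rotations or absorption at all; they reduce the construction of a loose Hamilton cycle to finding perfect matchings in auxiliary random (sub)hypergraphs and invoke the Johansson--Kahn--Vu theorem on the Shamir matching problem, which is what produces the $n\log n$ edge threshold.

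The genuine gap in your sketch is the claim that the P\'osa rotation argument ``is a fairly direct lift of the classical graph construction.'' It is not: rotations act on loose paths in a $k$-graph through edges meeting the path in prescribed intersection patterns, the endpoint set produced by a rotation does not have the clean booster/expansion structure it has for graphs, and no routine hypergraph analogue of the rotation--extension technique at density $p = \omega(\log n/n^{k-1})$ is available --- this is precisely the obstacle that forced the known proofs through the perfect-matching reduction instead. The absorption half of your plan has the same character: absorbing structures for loose cycles are standard in dense hypergraphs, but at essentially the threshold density the existence and activation of absorbers for an arbitrary residual set $R$, with a union bound over all such $R$, is a substantial technical project (comparable to, and historically harder than, the matching reduction), not a step one can defer to ``careful balancing.'' As written, the two load-bearing components of your argument are asserted rather than proved, so the proposal does not yet constitute a proof; if you want a self-contained route, the efficient one is to follow \cite{F,DF}: partition the vertices appropriately, encode loose-path segments as hyperedges of an auxiliary random hypergraph, and apply the Johansson--Kahn--Vu matching theorem, then transfer to $\Hnm$ exactly as the paper does.
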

  \noindent Theorems \ref{thm:main}, \ref{thm:hamHnm3}, and \ref{thm:hamHnmk} immediately imply the following fact.
  \begin{corollary}
    \label{cor:ham}
    There is a constant $C > 0$ such that if $C\log n \le d = o(n^{1/2})$, then
  \begin{equation*}
    \hh^{(3)}(n,d) \text{ contains a loose Hamilton cycle a.a.s.\ }
  \end{equation*}
  For every $k \ge 4$ if $\log n = o(d)$ and $d = o(n^{1/2})$, then
  \begin{equation*}
    \Hnd \text{ contains a loose Hamilton cycle a.a.s.\ }
  \end{equation*}
  \end{corollary}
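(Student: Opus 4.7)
The plan is to deduce the corollary directly from Theorems~\ref{thm:main}, \ref{thm:hamHnm3}, and~\ref{thm:hamHnmk}, using the (trivial but crucial) fact that containing a loose Hamilton cycle is a monotone increasing hypergraph property: if $H \subseteq H'$ and $H$ has a loose Hamilton cycle, so does $H'$.

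Fix the constants $c$ and $C_0$ guaranteed by Theorem~\ref{thm:main}, and let $C_1$ be the constant appearing in Theorem~\ref{thm:hamHnm3}. For the case $k=3$, I would set the corollary's constant to $C := \max\{C_0,\, 4C_1/c\}$. Then for any $d$ with $C\log n \le d = o(n^{1/2})$, the integer $m := \lfloor cnd/3\rfloor$ satisfies $m \ge C_1 n\log n$ for all sufficiently large $n$, so the hypotheses of both Theorem~\ref{thm:main} (with $k=3$) and Theorem~\ref{thm:hamHnm3} hold simultaneously. Theorem~\ref{thm:main} supplies a joint distribution of $\hh^{(3)}(n,d)$ and $\Hnm$ on a common probability space under which the event $\ac_1 = \{\Hnm \subseteq \hh^{(3)}(n,d)\}$ is a.a.s. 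Since the marginal distribution of $\Hnm$ in this coupling is the correct uniform one, Theorem~\ref{thm:hamHnm3} asserts that $\ac_2 = \{\Hnm \text{ contains a loose Hamilton cycle}\}$ is also a.a.s. The intersection $\ac_1 \cap \ac_2$ is a.a.s., and on this intersection $\hh^{(3)}(n,d)$ contains a loose Hamilton cycle by monotonicity.

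For $k\ge 4$, suppose $\log n = o(d)$ and $d = o(n^{1/2})$. Then $d \ge C_0 \log n$ for large $n$, and Theorem~\ref{thm:main} produces the same type of coupling of $\Hnd$ with $\Hnm$, where now $m := \lfloor cnd/k\rfloor = \Theta(nd)$. From $\log n = o(d)$ we infer $n\log n = o(m)$, so Theorem~\ref{thm:hamHnmk} guarantees a loose Hamilton cycle in $\Hnm$ a.a.s., and the same monotonicity argument transfers loose Hamiltonicity to $\Hnd$.

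There is no genuine obstacle: the proof is a one-line chaining of the three theorems via the coupling. The only bookkeeping required is calibrating the constant $C$ in the corollary's $k=3$ statement large enough (relative to the fixed constant $c$ of Theorem~\ref{thm:main}) to guarantee $m \ge C_1 n\log n$, which is precisely why the factor $4C_1/c$ appears in the choice of $C$ above.
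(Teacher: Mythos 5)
Your proof is correct and is exactly the paper's argument: the paper states that Theorems~\ref{thm:main}, \ref{thm:hamHnm3}, and~\ref{thm:hamHnmk} immediately imply the corollary, and your chaining via the coupling plus monotonicity of loose Hamiltonicity (with the routine calibration of $C$ so that $m=\lfloor cnd/k\rfloor$ meets the edge-count hypotheses) is precisely the intended deduction.
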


  \section{Preliminaries}\label{sec:prelim}
We say that a $k$-multigraph is \emph{simple} if it is a $k$-graph, that is, if it contains neither multiple edges nor loops.

Given a sequence $\x \in [n]^{ks}$, $s \in \{ 1, 2, \dots \}$, let $\hyp(\x)$ stand for a $k$-multi\-graph with~$s$ edges
\begin{equation*}
   x_{ki+1}\dots x_{ki + k}, \qquad i = 0, \dots, s-1.
\end{equation*}
   In what follows it will be convenient to work directly with the sequence $\x$ rather than with the $k$-multigraph $\hyp(\x)$. Recycling the notation, we still refer to the $k$-tuples of $\x$ which correspond to the edges, loops, and proper edges of $\hyp(\x)$ as \emph{edges}, \emph{loops}, and \emph{proper edges} of $\x$, respectively. We say that $\x$ contains \emph{multiple edges}, if $\hyp(\x)$ contains multiple edges, that is, some two edges of $\x$ are identical as multisets. By $\lambda(\x)$ we denote the number of loops in $\x$.

Let $\X = (X_1, \dots, X_{nd})$ be a sequence of i.i.d.\;random variables, each distributed uniformly over $[n]$:
\begin{equation*}
  \prob{X_i = j} = \frac{1}{n}, \qquad 1 \le i \le nd, \quad 1 \le j \le n.
\end{equation*}
Set
\begin{equation*}
  L := n^{1/4}d^{1/2}.
\end{equation*}
  \begin{proposition}
    \label{prop:XE}
    If $d \to \infty$, and $d = o(n^{1/2})$, then a.a.s.\
      $\X$ has no multiple edges and $\lambda(\X) \le L$.
  \end{proposition}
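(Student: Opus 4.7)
The plan is to handle the two claims separately by a pair of first-moment (Markov) arguments, exploiting the fact that the $M := nd/k$ edges of $\X$ are read off disjoint blocks of coordinates and are therefore mutually independent, each being a uniform random element of $[n]^k$.

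For the absence of multiple edges, I would fix two distinct indices $i < j$ and bound $\prob{e_i \equiv e_j}$, meaning equality as multisets. Conditioning on $e_j$, its underlying multiset admits at most $k!$ orderings in $[n]^k$, each of probability exactly $1/n^k$, so $\prob{e_i \equiv e_j} \le k!/n^k$. Summing over the $\binom{M}{2} = O(n^2 d^2)$ unordered pairs of edges, the expected number of coinciding pairs is at most $O(d^2/n^{k-2})$. For $k \ge 3$ and $d = o(n^{1/2})$ this is $O(d^2/n) = o(1)$ (strictly smaller for $k \ge 4$), and Markov's inequality finishes this half.

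For the bound on loops, $\lambda(\X)$ is the sum of $M$ i.i.d.\ Bernoulli indicators, one per edge. The probability that a specific edge is a loop is
\begin{equation*}
    1 - \frac{n(n-1)\cdots(n-k+1)}{n^k} \le \binom{k}{2}\frac{1}{n},
\end{equation*}
by a union bound over pairs of coordinates colliding. Hence $\E \lambda(\X) \le (nd/k)\cdot k(k-1)/(2n) = (k-1)d/2 = O(d)$. Markov's inequality then gives
\begin{equation*}
    \prob{\lambda(\X) > L} \le \frac{\E \lambda(\X)}{L} = O\!\left(\frac{d}{n^{1/4}d^{1/2}}\right) = O\!\left(\frac{d^{1/2}}{n^{1/4}}\right) = o(1),
\end{equation*}
again by the assumption $d = o(n^{1/2})$. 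A union bound over the two (unlikely) events completes the proof.

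There is no real obstacle: both parts reduce to routine expectation computations, and in fact the choice $L = n^{1/4}d^{1/2}$ is tuned exactly so that Markov (one could also use Chernoff, since the indicators are independent, but this is unnecessary) suffices. The hypothesis $d = o(n^{1/2})$ is used in essentially the same way in both halves, which is reassuring: the threshold for avoiding a repeated edge and the threshold for $\lambda(\X) \le L$ coincide.
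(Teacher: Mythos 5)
Your argument is correct and is essentially the paper's own proof: a first-moment bound of $\binom{M}{2}k!/n^k = O(d^2 n^{2-k}) = o(1)$ for repeated edges, and $\E\lambda(\X) \le M\binom{k}{2}/n = O(d) = o(L)$ followed by Markov for the loop count. The only difference is that you spell out the per-pair and per-edge probability estimates the paper states implicitly, which changes nothing of substance.
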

  \begin{proof}
    Both statements hold a.a.s.\  by Markov's inequality, because the expected number of pairs of multiple edges in $\X$ is at most
\begin{equation*}
  \binom{M} 2 \frac{k!}{n^k} = O(d^2n^{2-k}) = o(1);
\end{equation*}
and the expected number of loops in $\X$ is
\begin{equation*}
  \E \lambda(\X) \le M\binom k 2 n^{-1} = O(d) = o(n^{1/4}d^{1/2}).
\end{equation*}
  \end{proof}
\noindent Let $\Sc \subset [n]^{nd}$ be the family of all sequences in which every value $i \in [n]$ occurs precisely $d$ times.
Let $\Y = (Y_1, \dots, Y_{nd})$ be a sequence choosen from $\Sc$ uniformly at random. One can equivalently define $\Y$ as a discrete time process determined by the conditional probabilities
  \begin{equation}\label{eq:Ycond}
  \Pc{Y_{t+1} = v}{Y_{1}, \dots, Y_{t}} = \frac{d - \deg_t(v)}{nd - t},  \qquad v = 1, \dots, n, \quad t = 0, \dots, nd-1,
\end{equation}
where
  $$\deg_{t}(v) := |\left\{ 1 \le s \le t : Y_s = v \right\}|.$$
  Assuming $k | (nd)$, we define {a random $d$-regular $k$-multigraph}
  $$\PHnd := \hyp(\Y).$$
  Note that for every $H \in \hcnd$ the number of sequences giving $H$ is the same, namely, $M!(k!)^M$. Therefore $\Hnd$ can be obtained from $\PHnd$ by conditioning on simplicity.

  Probably a more popular way to define $\hh_*^{(k)}(n,d)$ is via the so called \emph{configuration model}, which, for $k =2$, first appeared implicitly in Bender and Canfield \cite{BC} and was given in its explicit form by Bollob\'as \cite{B80} (its generalization to every $k$ is straightforward). A \emph{configuration} is a partition of the set $[n] \times [d]$ into $M$ sets of size $k$, say, $P_1, \dots, P_M$. Then $\PHnd$ is obtained by taking a configuration uniformly at random and mapping every set
$ P_i = \left\{ (v_1, w_1), \dots, (v_k,w_k) \right\}$
to an edge $v_1\dots v_k$.

  The idea of obtaining $\PHnd$ from a random sequence for $k=2$ was used independently Bollob\'as and Frieze \cite{BF} and Chv\'atal \cite{C}.

What makes studying $d$-regular $k$-graphs a bit easier than graphs, at least  for small $d$, is that a.a.s.\ $\Y$ has no multiple edges. However, they usually have a few loops, but, as it turns out, not too many.
Throughout the paper, for $r = 0, 1, \dots$ and $x \in \R$, we use the standard notation $(x)_r := x (x-1) \dots (x-r+1)$.
Recall that $L = n^{1/4}d^{1/2}$.
  \begin{proposition}
    \label{prop:YE}
    If $d \to \infty$, and $d = o(n^{1/2})$, then each of the following statements holds a.a.s.:
    \begin{itemize}
      \item[(i)] $\Y$ has no multiple edges,
      \item[(ii)] $\Y$ has no loop with a vertex of multiplicity at least 3,
      \item[(iii)] $\Y$ has no loop with two vertices of multiplicity at least 2,
      \item[(iv)] $\lambda(\Y) \le L$.
    \end{itemize}
  \end{proposition}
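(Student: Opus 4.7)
\medskip

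\noindent\textbf{Proof plan for Proposition \ref{prop:YE}.}

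The plan is to mimic the proof of Proposition \ref{prop:XE}: bound the expected number of ``bad'' configurations (for each of the four items) and invoke Markov's inequality. The only complication is that the coordinates of $\Y$ are not independent, so the first step is to establish a clean substitute for independence that is valid as long as we look at $O(1)$ positions at a time.

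Since $\Y$ is uniform on $\Sc$, it can be viewed as a uniformly random arrangement of the multiset $\{1^d, 2^d, \dots, n^d\}$. Hence for any distinct positions $i_1, \dots, i_r \in [nd]$, any target values $a_1, \dots, a_r \in [n]$, and the induced multiplicity vector $(m_1, \dots, m_n)$ with $\sum_v m_v = r$ and $m_v \le d$, one has
\begin{equation*}
  \prob{Y_{i_1} = a_1, \dots, Y_{i_r} = a_r} \;=\; \frac{\prod_{v=1}^{n} (d)_{m_v}}{(nd)_r} \;\le\; \frac{d^r}{(nd - r)^r} \;=\; \bigl(1 + O(r^2/(nd))\bigr) n^{-r}.
\end{equation*}
For $r \le 2k = O(1)$, this is asymptotically the same as the probability $n^{-r}$ that the corresponding equality would hold for the i.i.d.\ sequence $\X$. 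So every expected count computed in the proof of Proposition \ref{prop:XE} can be reused up to a $1 + o(1)$ factor.

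With this in hand, each item is a single Markov-type bound. For (i), exactly as in Proposition \ref{prop:XE}, the expected number of pairs of equal edges is $O(d^2 n^{2-k}) = o(1)$ (using $k \ge 3$ and $d = o(n^{1/2})$). For (ii), the expected number of edges containing a vertex of multiplicity at least $3$ is at most $M \binom{k}{3} n \cdot \frac{(d)_3}{(nd)_3} = O(d/n) = o(1)$. For (iii), the expected number of edges containing two distinct vertices each of multiplicity at least $2$ is at most $M \binom{k}{2}\binom{k-2}{2} n^2 \cdot \frac{(d)_2^2}{(nd)_4} = O(d/n) = o(1)$. For (iv), the expected number of loops (pairs of equal coordinates inside an edge summed over edges) is at most $M \binom{k}{2} n \cdot \frac{(d)_2}{(nd)_2} = O(d) = o(n^{1/4} d^{1/2}) = o(L)$, so $\lambda(\Y) \le L$ a.a.s.\ by Markov. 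A union bound over the complements of (i)--(iv) finishes the proof.

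\medskip

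\noindent There is no genuine obstacle: the only care needed is checking that the sampling-without-replacement probabilities $\prod_v (d)_{m_v}/(nd)_r$ remain close enough to $n^{-r}$ for the constant-sized configurations we enumerate, which is trivial once $d = o(n^{1/2})$ is exploited only in the final step (items (ii) and (iii) are by far the most interesting, since they replace the ``no loop at all'' conclusion available for $\X$ with a precise description of the structure of the $O(d)$ loops that $\Y$ typically carries).
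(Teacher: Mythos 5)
Your proposal is correct and follows essentially the same route as the paper: the paper's proof also bounds the expected number of each bad configuration and applies Markov's inequality, and its multinomial-coefficient ratios are exactly the sampling-without-replacement probabilities $\prod_v (d)_{m_v}/(nd)_r$ you isolate as a lemma. Your explicit comparison with the i.i.d.\ case is just a cosmetic repackaging of the same computation, and all four bounds match the paper's.
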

  \begin{proof}
    The first three statements hold because the expected number of undesired objects tends to zero.

(i) The expected number of pairs of multiple edges in $\Y$ is
\begin{multline*}
  \binom{M} 2 \sum_{k_1 + \ldots + k_n = k} \frac{\binom{k}{k_1, \dots, k_n}^2\binom{nd-2k}{d-2k_1,\dots,d-2k_n}}{\binom{nd}{d,\dots,d}}   \le n^2d^2 n^k \frac{ k!^2 d^{2k}}{(nd)_{2k}} = O\left(n^{2-k}d^2\right) = o(1).
\end{multline*}

(ii) The expected number of loops in $\Y$ having a vertex of multiplicity at least $3$ is at most
$$M \frac{\binom k 3 n \binom{nd-3}{d-3, d, \dots, d}}{\binom{nd}{d, \dots, d}} \le nd\frac{k^3nd^3}{(nd)_3} =  O(n^{-1}d) = o(1).$$

(iii) Similarly the expected number of loops in $\Y$ having at least two vertices of multiplicity at least $2$ is at most
$$M\frac{\binom k 2 \binom {k-2} 2 n^2 \binom{nd-4}{d-2, d-2, d,\dots,d}}{\binom{nd}{d, \dots, d}} \le nd\frac{k^4n^2d^4}{(nd)_4} = O(n^{-1}d) = o(1).$$
The statement (iv) follows by Markov's inequality, because 
\begin{equation*}
  \E \lambda(\Y) \le M\frac{\binom k 2 n \binom {nd-2}{d-2, d, \dots, d}}{\binom{nd}{d,\dots,d}} \le nd \frac{k^2nd^2}{(nd)_2} = O(d) = o(n^{1/4}d^{1/2}).
\end{equation*}
  \end{proof}

  In a couple of forthcoming proofs we will need the following concentration inequality (see, e.g., McDiarmid \cite[\S 3.2]{M98}). Let $\Sc_N$ be the set of permutations of $[N]$ and let $\mathbf Z$ be distributed uniformly over $\Sc_N$. Suppose that function $f : \Sc_N \to \R$ satisfies a Lipschitz property, that is, for some $b >0$
   \begin{equation*}
     |f(\mathbf z) - f(\mathbf z')| \le b,
   \end{equation*}
   whenever $\mathbf z'$ can be obtained from $\mathbf z$ by swapping two elements. Then
   \begin{equation}
     \prob{|f(\mathbf Z) - \E f(\mathbf Z)| \ge t } \le 2 \e^{-2t^2/b^2N}, \qquad t \ge 0.
     \label{eq:permconc}
   \end{equation}
We set $r := 2^k + 1$ and $c := 1/(2r+1)$. For the rest of the paper let \begin{equation*}m := \lfloor cM \rfloor.\end{equation*} Color the first $rm$ edges of $\Y$ \emph{red} and the remaining $M - rm$ edges \emph{green}. Define $\Y_{\red} = (Y_1, \dots, Y_{krm})$ and $\Y_{\green} = (Y_{krm+1}, \dots, Y_{nd})$.
Consider a function $\fee : \Sc \to \Z$ defined by
  \begin{equation*}\label{eq:feedef}
    \fee(\y) := \sum_{v=1}^n (\degb(\y;v))_2,
  \end{equation*}
  where
$    \degb(\y;v) := |\left\{ i \in [rkm + 1, kM] : y_i = v\right\}|$
is the \emph{green degree} of $v$. It can be easily checked that
  \begin{equation}\label{eq:feemean}
   \E \fee(\Y) = n (d)_2 \frac{(kM - rkm)_2}{(kM)_2} = \Theta \left( nd^2 \right).
 \end{equation}
 Suppose that sequences $\y, \z \in \Sc$ can be obtained from each other by swapping two coordinates. Since such a swapping affects the green degree of at most two vertices and for every such vertex the green degree changes by at most one, we get
    \begin{equation*}
      |\fee(\y) - \fee(\z)| \le 2\max_{1 \le r \le d} \{ (r)_2 - (r-1)_2 \} = 2 \left( (d)_2 - (d-1)_2 \right) < 4d.
    \end{equation*}
Thus, treating $\Y$ as a permutation of $nd$ elements, \eqref{eq:permconc} implies
    \begin{equation}\label{eq:feemart}
      \prob{|\fee(\Y) - \E \fee(\Y)| \ge x} \le 2\exp\left\{ -\frac{x^2}{8nd^3} \right\}, \qquad x > 0.
    \end{equation}

  \section{Embedding $\Hnm$ into $\PHnd$}\label{sec:embedding}

    A crucial step toward the embedding is to couple the processes $\left( X_t \right)$ and $(Y_t)$, $t = 1, \dots, nd$, in such a way that a.a.s.\  $\X$ and $\Y$ have many edges in common. For this, let $I_1, \dots, I_{nd}$ be an i.i.d.\ sequence of symmetric Bernoulli variables independent of $\X$:
    \begin{equation*}
      \prob{I_t = 0} = \prob{I_t = 1} = 1/2, \qquad t = 1, \dots, nd.
    \end{equation*}
  We define $Y_1, Y_2, \dots$ inductively. Fix $t \ge 0$. Suppose that we have already revealed the values $Y_1, \dots, Y_{t}$.
  If
   \begin{equation}\label{eq:nonneg}
     2\frac{d-\deg_t(v)}{nd - t} - \frac{1}{n} \ge 0 \quad \text{for every } v \in [n],
   \end{equation}
   then generate an auxiliary random variable $Z_{t+1}$ independently of $I_{t+1}$ according to the following distribution (note that the left-hand side of \eqref{eq:nonneg} sums over $v \in [n]$ to 1)
  \begin{equation*}
    \Pc{Z_{t+1} = v}{Y_1, \dots, Y_t} = 2\frac{d-\deg_t(v)}{nd - t} - \frac{1}{n}, \quad v = 1, \dots, n.
  \end{equation*}
  If \eqref{eq:nonneg} holds, set $Y_{t + 1} = I_{t + 1} X_{t + 1} + (1-I_{t + 1}) Z_{t + 1}$. Otherwise generate $Y_{t + 1}$ directly according to the conditional probabilities \eqref{eq:Ycond}.
The distribution of $Z_{t+1}$ is chosen precisely in such a way that \eqref{eq:Ycond} holds for any values of variables $Y_1, \dots, Y_t$, regardless of whether \eqref{eq:nonneg} is satisfied or not. This guarantees that $\Y = (Y_1, \dots, Y_{nd})$ is actually uniformly distributed over $\Sc$.

 The following lemma states that we can embed $\Hnm$ in the red subgraph of $\hh_*^{(k)}(n,d)$.
  \begin{lemma}
    \label{lem:embed}
    For every $k \ge 3$, there is a constant $C > 0$ such that if $d \ge C\log n$ and $d = o(n^{1/2})$, then one can define a joint distribution of $\Hnm$ and $\Y$ in such a way that
    \begin{equation*}
      \Hnm \subset \hyp(\Y_{\red})\quad \text{a.a.s.\ }
    \end{equation*}
  \end{lemma}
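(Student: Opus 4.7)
The idea is to construct $\Hnm$ directly from the coupling $(\X,\Y,(I_t))$, so that the construction automatically lands inside $\hyp(\Y_{\red})$. Call a block $B_i := (X_{ki+1}, \dots, X_{ki+k})$ of $\X$ \emph{clear} if $I_{ki+1} = \dots = I_{ki+k} = 1$. Scanning the blocks of $\X$ in order, accept a block whenever it is clear, is a proper edge, and differs as a multiset from every previously accepted block; let $\Hnm$ be the $k$-graph formed by the first $m$ accepted blocks. Since $(I_t)$ is independent of $\X$, the $k$-sets produced by clear proper blocks are i.i.d.\ uniform on $\binom{[n]}{k}$, and a standard exchangeability argument shows that the first $m$ distinct such $k$-sets form a uniformly random $m$-subset of $\binom{[n]}{k}$, i.e.\ a copy of $\Hnm$. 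The key observation is that, by the coupling recipe, whenever a block is clear \emph{and} condition~\eqref{eq:nonneg} holds at every step $t$ within the block, we have $Y_t = X_t$ throughout the block; so if additionally $i < rm$, the accepted edge is already an edge of $\hyp(\Y_{\red})$ at the same position.

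It therefore suffices to establish a.a.s.\ that (i)~condition~\eqref{eq:nonneg} holds for every $t \in [0, krm)$, and (ii)~at least $m$ clear, proper, pairwise distinct blocks occur among $B_0, \dots, B_{rm-1}$. For (i), rewrite \eqref{eq:nonneg} as $\deg_t(v) \le (nd+t)/(2n)$; it is automatic when $t < d/2$ since $\deg_t(v) \le t$, so assume $t \in [d/2, krm]$. Then $\deg_t(v)$ is hypergeometric with mean $t/n$, and because $rc = r/(2r+1) < 1/2$ the slack $(nd-t)/(2n)$ is $\Theta(d)$ uniformly in such $t$; a Chernoff bound for the hypergeometric upper tail gives $\prob{\deg_t(v) > (nd+t)/(2n)} \le \e^{-\Omega(d)}$. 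A union bound over the $n$ vertices and the $O(nd)$ relevant values of $t$ closes~(i), provided $d \ge C \log n$ for a sufficiently large $C = C(k)$. For (ii), the number of clear blocks among $B_0, \dots, B_{rm-1}$ is $\Bi(rm, 2^{-k})$ with mean $(1 + 2^{-k})\, m$, so a lower-tail Chernoff bound shows this count exceeds $(1 + 2^{-k-1})\, m$ a.a.s.; by Proposition~\ref{prop:XE}, a.a.s.\ $\X$ contains no multiple edges and at most $L = n^{1/4} d^{1/2} = o(m)$ loops in total, so removing the loops leaves at least $m$ valid acceptances.

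The decisive step is~(i): uniform control of $\max_v \deg_t(v)$ across all intermediate times $t \le krm$ is precisely where the hypothesis $d \ge C\log n$ is needed, as the $\e^{-\Omega(d)}$ tail must beat a $\Theta(n^2 d)$-fold union bound over $(v, t)$ pairs.
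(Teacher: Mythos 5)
Your proposal is correct and follows essentially the same route as the paper: the same coupling via the indicators $I_t$, the same observation that on clear (all-ones) red blocks one has $Y_t=X_t$ provided \eqref{eq:nonneg} holds up to time $krm$, the same exchangeability/symmetry argument for the exact distribution of the first $m$ accepted proper distinct edges, and the same counting of clear blocks versus loops. The only differences are cosmetic: the paper handles \eqref{eq:nonneg} by noting that any failure forces $\deg_{krm}(v)>d/2$, so a hypergeometric tail bound plus a union bound over the $n$ vertices alone suffices (avoiding your union over the $O(nd)$ times, which also works), and it uses Chebyshev rather than Chernoff for the clear-block count.
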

  \begin{proof}

Let
\begin{equation*}
  W = \left\{ 0 \le i \le rm - 1 : I_{ki + 1} = \dots = I_{ki + k} = 1 \right\}
\end{equation*}
and let
$\X'$ be the subsequence of $\X$ formed by concatenation of the edges
  $( X_{ki + 1}, \dots, X_{ki + k})$, $i \in W$.
Define the events
\begin{equation*}
  \ac = \left\{\X \text{ has no multiple edges}, \lambda(\X) \le L, |W| \ge m + L  \right\},
\end{equation*}
\begin{equation*}
  \bc = \left\{  \text{inequality \eqref{eq:nonneg} holds for every } v \in [n] \text{ and } t < krm \right\}.
\end{equation*}
Suppose that $\ac$ holds. Then all edges of $\X'$ are distinct and at least $m$ of them are proper. By symmetry, we can take, say, the first $m$ of these edges to form $\Hnm$. If $\ac$ fails, we simply generate $\Hnm$ independently of everything else.

Further, if $\bc$ holds, then for every $i \in W$ we have
\begin{equation*}
  (Y_{ki+1}, \dots, Y_{ki + k}) = (X_{ki+1}, \dots, X_{ki + k}),
\end{equation*}
which is to say that $\hyp(\X')$ is a subgraph of $\hyp(\Y_{\red})$.
Consequently,
\begin{equation*}
  \prob{\Hnm \subset \hyp(\Y_{\red})} \ge \prob{\ac \cap \bc},
\end{equation*}
so it is enough to show that each of the events $\ac$ and $\bc$ holds a.a.s.\

By Proposition \ref{prop:XE}, the first two conditions defining $\ac$ hold a.a.s.  As for the last one, note that $|W| \sim \Bi(rm,2^{-k})$, therefore $\E |W| = (1 + 2^{-k})m$ and $\Var |W| = O(m)$. Since $L = o(m)$, Chebyshev's inequality implies that for $n$ large enough
\begin{equation*}
  \prob{|W| < m + L} \le \frac{\Var |W|}{( 2^{-k}m - L )^2} = O(m^{-1}) = o(1).
\end{equation*}
Concerning the event $\bc$, if for some $t < krm$ and some $v \in [n]$ inequality \eqref{eq:nonneg} does not hold, then $\deg_t(v) > d/2$, and consequently $\deg_{krm}(v) > d/2$. Note that $\deg_{krm}(v)$, $v =1, \dots, n$, are identically distributed hypergeometric random variables. Let $X := \deg_{krm}(1)$. The probability that $\bc$ fails is thus at most
\begin{equation*}
  \prob{ \deg_{krm}(v) > d/2 \text{ for some } v \in [n]} \le n \prob{X > d/2}.
\end{equation*}
We have $\E X = krm/n \le rcd$. Since $c < 1/2r$, applying, say, Theorem 2.10 from~\cite{JLR}, we obtain
\begin{equation*}
  \prob{X > d/2} \le \exp \left\{ - a d \right\} \le \exp\left\{ -aC\log n \right\},
\end{equation*}
for some positive constant $a$. Choosing $C > a^{-1}$ we get $n\prob{X > d/2} = o(1)$, thus concluding the proof.
  \end{proof}

\section{Getting rid of red loops}\label{sec:redloops}
  Let $\ec$ be the family of sequences in $\Sc$ with no multiple edges and containing at most $L$ loops, but no loops of other type than $x_1 x_1 x_2 \dots x_{k-1}$ (up to reordering of vertices), where $x_1, \dots, x_{k-1}$ are distinct.  By Proposition \ref{prop:YE} we have that $\Y \in \ec$ a.a.s.
  Partition~$\ec$ according to the number of loops into sets
  \begin{equation*}
    \ec_l := \left\{ \y \in \ec : \lambda(\y) = l \right\}, \qquad l = 0, \dots, L.
  \end{equation*}
 Let $\gc_l$ be the family of those sequences in $\ec_l$ which contain no red loops. Note that $\gc_0 = \ec_0$ consists precisely of those sequences $\y \in \Sc$ for which $\hyp(\y)$ is simple.

Condition on $\Y \in \ec$ and let $\Y'$ be a sequence obtained from $\Y$ by swapping the red loops of $\Y$ (if any) with a subset of green proper edges chosen uniformly at random.
More formally, let $f_1, \dots, f_r$ be the red loops and $e_1, \dots, e_g$ be the green proper edges of $\Y$ in the order they appear in $\Y$. Pick a set of indices $1 \le i_1 < \dots < i_r \le g$ uniformly at random, and swap $f_j$ with $e_{i_j}$ for $j = 1, \dots, r$, preserving the order of vertices inside the edges. Note that this does not change the underlying $k$-multigraph, that is, $\hyp(\Y) = \hyp(\Y')$.
\begin{proposition}\label{prop:unifG}
  $\Y'$ is uniform on each $\gc_l$, $l = 0, \dots, L$.
\end{proposition}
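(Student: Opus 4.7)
The plan is to show that the distribution of $\Y'$, conditionally on $\Y \in \ec_l$, assigns equal mass to each $\y' \in \gc_l$. Because $\Y$ is uniform on $\Sc$ and hence on $\ec_l$, this amounts to computing, for each $\y' \in \gc_l$,
\[
  \Pc{\Y' = \y'}{\Y \in \ec_l} = \frac{1}{|\ec_l|} \sum_{\y \in \ec_l} \frac{\#\{\sigma : (\y, \sigma) \mapsto \y'\}}{\binom{g(\y)}{j(\y)}},
\]
where $j(\y)$ is the number of red loops of $\y$ and $g(\y) = M - rm - (l - j(\y))$ its number of green proper edges, and then checking that the result does not depend on $\y'$. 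As a preliminary observation, the swap only permutes positions of existing edges, so $\hyp(\Y) = \hyp(\Y')$; consequently $\Y'$ inherits from $\Y$ the absence of multiple edges and has exactly $l$ loops, all of the permitted shape, placed entirely in green positions, so $\Y' \in \gc_l$.

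The heart of the argument is a bijection, for each $0 \le j \le l$, between the $(\y, \sigma)$-preimages of $\y'$ with $j(\y) = j$ and pairs $(A, B)$, where $A$ is a $j$-subset of the $rm$ red positions of $\y'$ and $B$ is a $j$-subset of the $l$ loop positions of $\y'$. Given such $(A, B)$, I would construct $\y$ by exchanging, for each $i = 1, \dots, j$, the contents of the $i$-th element of $A$ with that of the $i$-th element of $B$ (both enumerated in position order); the associated $\sigma$ then consists of the ranks of the elements of $B$ in the enumeration of green proper edges of $\y$. Since $\hyp(\y) = \hyp(\y')$, no multiple edges or forbidden loop types appear, so $\y \in \ec_l$; conversely, from $(\y, \sigma)$ one reads off $A$ as the set of red positions of $\y$ holding loops and $B$ as the set of green-proper positions of $\y$ indexed by $\sigma$, giving the inverse map.

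This bijection yields $\binom{rm}{j}\binom{l}{j}$ preimages of $\y'$ with $j(\y) = j$, all having the common value $g(\y) = M - rm - l + j$. Substituting gives
\[
  \Pc{\Y' = \y'}{\Y \in \ec_l} = \frac{1}{|\ec_l|} \sum_{j=0}^{l} \binom{rm}{j}\binom{l}{j}\binom{M - rm - l + j}{j}^{-1},
\]
a quantity independent of $\y' \in \gc_l$, which is the desired uniformity. I expect the main technical subtlety to be the bookkeeping of the ordering conventions built into the swap, so that the forward and backward directions of the bijection correspond exactly; once the bijection is justified, the counting is immediate and the invariance of $g(\y)$ within a fixed value of $j(\y)$ is what makes the weighting collapse cleanly.
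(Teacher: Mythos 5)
Your proof is correct and follows essentially the same route as the paper's: condition on $\Y \in \ec_l$, group the preimages of a target $\y' \in \gc_l$ by the number $j$ of red loops, and observe that both the preimage count and the uniform-selection denominator depend only on $j$ (and $l$), never on $\y'$. The only difference is that your $(A,B)$-bijection makes explicit the counts $\binom{rm}{j}\binom{l}{j}$ and $\binom{M-rm-l+j}{j}$, which the paper merely asserts (as $b_r$ and $a_r$) to be independent of the target by symmetry.
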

\begin{proof}
 Fix $l$. Clearly $\Y' \in \gc_l$ if and only if $\Y \in \ec_l$. Also, $\Y$ is uniform on $\ec_l$. For integer $r \in [0, l]$, every $\z \in \gc_l$ can be obtained from the same number (say, $b_r$) of $\y$'s in $\ec_l$ with exactly $r$ red loops. On the other hand, for every $\y$ with exactly $r$ red loops there is the same number (say, $a_r$) of $\z$'s in $\gc_l$ that can be obtained from $\y$. Hence for every $\z \in \gc_l$
  \begin{equation*}
    \Pc{\Y' = \z}{\Y \in \ec_l } = \sum_{r=0}^l \frac{b_r}{a_r|\ec_l|},
  \end{equation*}
which is the same for all $\z \in \gc_l$.
\end{proof}

 The following technical result will be used in the next section. Let

  \begin{equation*}
    \tilde \Sc := \left\{ \y \in \Sc : |\fee(\y) - \E \fee(\Y)| \le n^{3/4}d  \right\}.
  \end{equation*}

 \begin{proposition}\label{prop:probswap}
     If $d = o(n^{1/2})$, then
     $$\prob{ \Y' \in \tilde \Sc} = 1 - o(1).$$
   \end{proposition}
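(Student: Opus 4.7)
The plan is to split the deviation as
\begin{equation*}
\fee(\Y') - \E\fee(\Y) = \bigl(\fee(\Y') - \fee(\Y)\bigr) + \bigl(\fee(\Y) - \E\fee(\Y)\bigr)
\end{equation*}
and control the two pieces separately: the second by the concentration inequality \eqref{eq:feemart} already in hand, and the first by a short deterministic estimate that exploits the fact that very few coordinates actually move during the swapping construction.

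First I would pass to the a.a.s.\ event $\{\Y \in \ec\}$ guaranteed by Proposition~\ref{prop:YE}. On this event $\lambda(\Y) \le L = n^{1/4}d^{1/2}$, so at most $L$ swaps are performed, and every loop of $\Y$ has the canonical form $(v,v,w_1,\dots,w_{k-1})$ with distinct entries. Next I would apply \eqref{eq:feemart} with $x = n^{3/4}d/2$ to obtain
\begin{equation*}
\prob{|\fee(\Y) - \E\fee(\Y)| > n^{3/4}d/2} \le 2\exp\left\{-\frac{n^{1/2}}{32\,d}\right\} = o(1),
\end{equation*}
which uses precisely the assumption $d = o(n^{1/2})$.

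The remaining step is a deterministic bound on $|\fee(\Y') - \fee(\Y)|$ assuming $\Y \in \ec$. A single swap removes one green proper edge $(u_1,\dots,u_k)$ from the green part of $\Y$ and inserts one former red loop $(v,v,w_1,\dots,w_{k-1})$ in its place, leaving all other green coordinates untouched; thus at most $2k$ vertices have their green degree shifted, and each shift has absolute value at most $2$. Since any green degree is bounded by $d$, such a shift changes the corresponding term $(\degb(\cdot;u))_2$ by $O(d)$; hence one swap perturbs $\fee$ by $O(kd)$ and all $L$ swaps by at most $O(k\, n^{1/4}d^{3/2})$, which is $o(n^{3/4}d)$ whenever $d = o(n^{1/2})$. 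Summing the two bounds yields $|\fee(\Y') - \E\fee(\Y)| \le n^{3/4}d$ on an a.a.s.\ event, i.e.\ $\Y' \in \tilde\Sc$ a.a.s.

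I do not expect a serious obstacle here. The only delicate point is the per-swap Lipschitz estimate, and even there the margin is comfortable: the cumulative perturbation of $\fee$ caused by the at most $L$ swaps is smaller than the concentration window $n^{3/4}d$ by a factor of order $n^{1/2}/d^{1/2}$, which is $\omega(n^{1/4})$ in the regime $d = o(n^{1/2})$.
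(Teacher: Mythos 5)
Your proposal is correct and follows essentially the same route as the paper: a deterministic bound of order $O(Ld)=O(n^{1/4}d^{3/2})$ on $|\fee(\Y')-\fee(\Y)|$ on the event $\Y\in\ec$ (at most $L$ swaps, each shifting $O(k)$ green degrees by $O(d)$), combined with the concentration inequality \eqref{eq:feemart} for $\fee(\Y)$, which absorbs the perturbation since $Ld=o(n^{3/4}d)$ when $d=o(n^{1/2})$. The only cosmetic differences are your explicit triangle-inequality split with $x=n^{3/4}d/2$ versus the paper's shifting of the threshold to $n^{3/4}d-O(Ld)$, and its slightly more careful handling of the conditioning on $\Y\in\ec$.
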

\begin{proof}

    Suppose $\z$ is obtained from $\y$ by swapping a red loop with a green proper edge. This affects the green degree of at most $2k-1$ vertices $v$, and for every such $v$ we have
    \begin{equation*}
      \left| (\degb(\y;v))_2 - (\degb(\z;v))_2 \right| = O(d),
    \end{equation*}
    uniformly for all such $\y, \z$.
    Hence, uniformly
    \begin{equation*}
      |\fee(\Y) - \fee(\Y')|  = O(Ld), \qquad \Y \in \ec.
    \end{equation*}
    By Proposition \ref{prop:YE} we have that $\Y \in \ec$ a.a.s. Hence,
    \begin{multline*}
      \prob{\Y' \notin \tilde \Sc} \le \Prob\Big(|\fee(\Y) - \E \fee(\Y)| > n^{3/4}d - O(Ld) \ \big|\ \Y \in \ec\Big) \\
      \sim \Prob\Big(|\fee(\Y) - \E \fee(\Y)| > n^{3/4}d - O(Ld) \Big).
    \end{multline*}
    Finally, since $d = o(n^{1/2})$, the last probability tends to zero by~\eqref{eq:feemart}.
\end{proof}

\section{Getting rid of green loops}\label{sec:greenloops}

In this section we complete the proof of Theorem \ref{thm:main}, deferring proofs of two technical results to the next section. By Lemma \ref{lem:embed}, which we proved in Section \ref{sec:embedding}, the random $k$\mbox{-}multigraph $\hyp(\Y_{\red})$ contains $\Hnm$ a.a.s.
  Since $\Hnm \subset \hyp(\Y)$ implies that $\Hnm \subset \hyp(\Y')$, it remains to define a procedure, which, a.a.s.\ transforms $\Y'$ (leaving the red edges of $\Y'$ intact) into a random $k$-graph distributed approximately as $\Hnd$.

% Magnification in xFig: 60%
% Opacity in Inkscape: 70%
\begin{figure}[h]
\centering
        \subfigure[]%{\input{fig/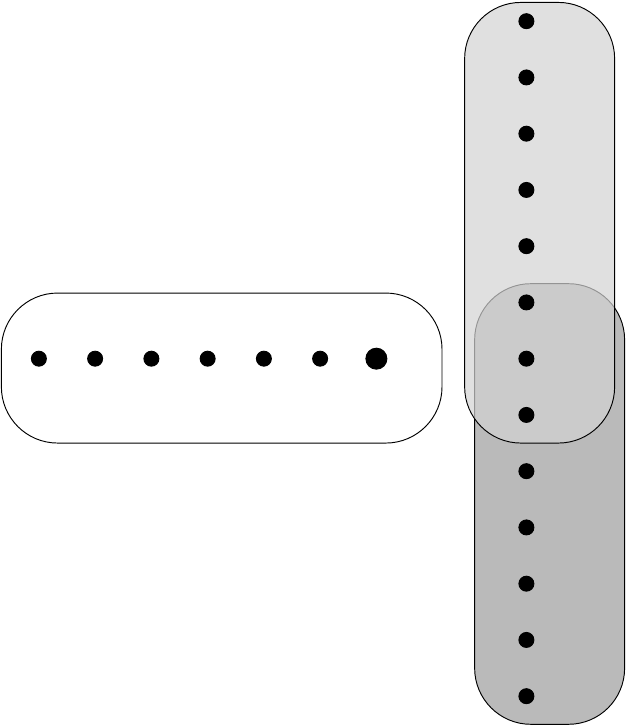_t}}
{%
\begin{picture}(0,0)%
\includegraphics{switching1.pdf}%
\end{picture}%
\setlength{\unitlength}{2368sp}%
\begin{picture}(5127,5799)(5389,-15598)
\put(9751,-12736){\makebox(0,0)[lb]{\smash{{\SetFigFont{7}{8.4}{\rmdefault}{\mddefault}{\updefault}{\color[rgb]{0,0,0}$w_2$}%
}}}}
\put(9751,-11836){\makebox(0,0)[lb]{\smash{{\SetFigFont{7}{8.4}{\rmdefault}{\mddefault}{\updefault}{\color[rgb]{0,0,0}$y_1$}%
}}}}
\put(9751,-11386){\makebox(0,0)[lb]{\smash{{\SetFigFont{7}{8.4}{\rmdefault}{\mddefault}{\updefault}{\color[rgb]{0,0,0}$y_2$}%
}}}}
\put(9751,-12286){\makebox(0,0)[lb]{\smash{{\SetFigFont{7}{8.4}{\rmdefault}{\mddefault}{\updefault}{\color[rgb]{0,0,0}$w_s$}%
}}}}
\put(9751,-10936){\makebox(0,0)[lb]{\smash{{\SetFigFont{7}{8.4}{\rmdefault}{\mddefault}{\updefault}{\color[rgb]{0,0,0}$y_3$}%
}}}}
\put(9751,-10486){\makebox(0,0)[lb]{\smash{{\SetFigFont{7}{8.4}{\rmdefault}{\mddefault}{\updefault}{\color[rgb]{0,0,0}$y_4$}%
}}}}
\put(9751,-13186){\makebox(0,0)[lb]{\smash{{\SetFigFont{7}{8.4}{\rmdefault}{\mddefault}{\updefault}{\color[rgb]{0,0,0}$w_1$}%
}}}}
\put(9751,-10036){\makebox(0,0)[lb]{\smash{{\SetFigFont{7}{8.4}{\rmdefault}{\mddefault}{\updefault}{\color[rgb]{0,0,0}$y_{k-s}$}%
}}}}
\put(9751,-13636){\makebox(0,0)[lb]{\smash{{\SetFigFont{7}{8.4}{\rmdefault}{\mddefault}{\updefault}{\color[rgb]{0,0,0}$z_1$}%
}}}}
\put(9751,-14086){\makebox(0,0)[lb]{\smash{{\SetFigFont{7}{8.4}{\rmdefault}{\mddefault}{\updefault}{\color[rgb]{0,0,0}$z_2$}%
}}}}
\put(9751,-14536){\makebox(0,0)[lb]{\smash{{\SetFigFont{7}{8.4}{\rmdefault}{\mddefault}{\updefault}{\color[rgb]{0,0,0}$z_3$}%
}}}}
\put(9751,-14986){\makebox(0,0)[lb]{\smash{{\SetFigFont{7}{8.4}{\rmdefault}{\mddefault}{\updefault}{\color[rgb]{0,0,0}$z_4$}%
}}}}
\put(9751,-15436){\makebox(0,0)[lb]{\smash{{\SetFigFont{7}{8.4}{\rmdefault}{\mddefault}{\updefault}{\color[rgb]{0,0,0}$z_{k-s}$}%
}}}}
\put(10426,-11236){\makebox(0,0)[lb]{\smash{{\SetFigFont{10}{12.0}{\rmdefault}{\mddefault}{\updefault}{\color[rgb]{0,0,0}$e_1$}%
}}}}
\put(10501,-14386){\makebox(0,0)[lb]{\smash{{\SetFigFont{10}{12.0}{\rmdefault}{\mddefault}{\updefault}{\color[rgb]{0,0,0}$e_2$}%
}}}}
\put(7126,-13636){\makebox(0,0)[lb]{\smash{{\SetFigFont{10}{12.0}{\rmdefault}{\mddefault}{\updefault}{\color[rgb]{0,0,0}$f$}%
}}}}
\put(8259,-12966){\makebox(0,0)[lb]{\smash{{\SetFigFont{8}{9.6}{\rmdefault}{\mddefault}{\updefault}{\color[rgb]{0,0,0}$v\ \!v$}%
}}}}
\put(6076,-12961){\makebox(0,0)[lb]{\smash{{\SetFigFont{7}{8.4}{\rmdefault}{\mddefault}{\updefault}{\color[rgb]{0,0,0}$x_5$}%
}}}}
\put(6526,-12961){\makebox(0,0)[lb]{\smash{{\SetFigFont{7}{8.4}{\rmdefault}{\mddefault}{\updefault}{\color[rgb]{0,0,0}$x_4$}%
}}}}
\put(6976,-12961){\makebox(0,0)[lb]{\smash{{\SetFigFont{7}{8.4}{\rmdefault}{\mddefault}{\updefault}{\color[rgb]{0,0,0}$x_3$}%
}}}}
\put(7426,-12961){\makebox(0,0)[lb]{\smash{{\SetFigFont{7}{8.4}{\rmdefault}{\mddefault}{\updefault}{\color[rgb]{0,0,0}$x_2$}%
}}}}
\put(7876,-12961){\makebox(0,0)[lb]{\smash{{\SetFigFont{7}{8.4}{\rmdefault}{\mddefault}{\updefault}{\color[rgb]{0,0,0}$x_1$}%
}}}}
\put(5502,-12957){\makebox(0,0)[lb]{\smash{{\SetFigFont{7}{8.4}{\rmdefault}{\mddefault}{\updefault}{\color[rgb]{0,0,0}$x_{k-2}$}%
}}}}
\end{picture}%
}%
        \qquad
        \subfigure[]%{\input{fig/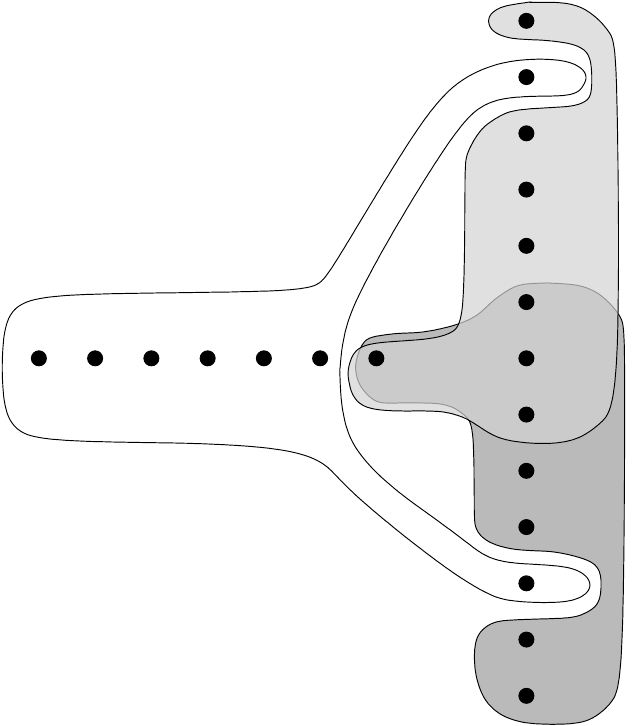_t}}
{%
\begin{picture}(0,0)%
\includegraphics{switching2.pdf}%
\end{picture}%
\setlength{\unitlength}{2368sp}%
\begin{picture}(5127,5804)(5389,-15601)
\put(9751,-12736){\makebox(0,0)[lb]{\smash{{\SetFigFont{7}{8.4}{\rmdefault}{\mddefault}{\updefault}{\color[rgb]{0,0,0}$w_2$}%
}}}}
\put(9751,-11836){\makebox(0,0)[lb]{\smash{{\SetFigFont{7}{8.4}{\rmdefault}{\mddefault}{\updefault}{\color[rgb]{0,0,0}$y_1$}%
}}}}
\put(9751,-11386){\makebox(0,0)[lb]{\smash{{\SetFigFont{7}{8.4}{\rmdefault}{\mddefault}{\updefault}{\color[rgb]{0,0,0}$y_2$}%
}}}}
\put(9751,-12286){\makebox(0,0)[lb]{\smash{{\SetFigFont{7}{8.4}{\rmdefault}{\mddefault}{\updefault}{\color[rgb]{0,0,0}$w_s$}%
}}}}
\put(9751,-10936){\makebox(0,0)[lb]{\smash{{\SetFigFont{7}{8.4}{\rmdefault}{\mddefault}{\updefault}{\color[rgb]{0,0,0}$y_3$}%
}}}}
\put(9751,-13186){\makebox(0,0)[lb]{\smash{{\SetFigFont{7}{8.4}{\rmdefault}{\mddefault}{\updefault}{\color[rgb]{0,0,0}$w_1$}%
}}}}
\put(9751,-10036){\makebox(0,0)[lb]{\smash{{\SetFigFont{7}{8.4}{\rmdefault}{\mddefault}{\updefault}{\color[rgb]{0,0,0}$y_{k-s}$}%
}}}}
\put(9751,-13636){\makebox(0,0)[lb]{\smash{{\SetFigFont{7}{8.4}{\rmdefault}{\mddefault}{\updefault}{\color[rgb]{0,0,0}$z_1$}%
}}}}
\put(9751,-14086){\makebox(0,0)[lb]{\smash{{\SetFigFont{7}{8.4}{\rmdefault}{\mddefault}{\updefault}{\color[rgb]{0,0,0}$z_2$}%
}}}}
\put(9751,-14986){\makebox(0,0)[lb]{\smash{{\SetFigFont{7}{8.4}{\rmdefault}{\mddefault}{\updefault}{\color[rgb]{0,0,0}$z_4$}%
}}}}
\put(9751,-15436){\makebox(0,0)[lb]{\smash{{\SetFigFont{7}{8.4}{\rmdefault}{\mddefault}{\updefault}{\color[rgb]{0,0,0}$z_{k-s}$}%
}}}}
\put(9751,-10486){\makebox(0,0)[lb]{\smash{{\SetFigFont{8}{9.6}{\rmdefault}{\mddefault}{\updefault}{\color[rgb]{0,0,0}$y_*$}%
}}}}
\put(9751,-14536){\makebox(0,0)[lb]{\smash{{\SetFigFont{8}{9.6}{\rmdefault}{\mddefault}{\updefault}{\color[rgb]{0,0,0}$z_*$}%
}}}}
\put(8361,-12961){\makebox(0,0)[lb]{\smash{{\SetFigFont{8}{9.6}{\rmdefault}{\mddefault}{\updefault}{\color[rgb]{0,0,0}$v$}%
}}}}
\put(10501,-14386){\makebox(0,0)[lb]{\smash{{\SetFigFont{10}{12.0}{\rmdefault}{\mddefault}{\updefault}{\color[rgb]{0,0,0}$e_2^{'}$}%
}}}}
\put(10426,-11236){\makebox(0,0)[lb]{\smash{{\SetFigFont{10}{12.0}{\rmdefault}{\mddefault}{\updefault}{\color[rgb]{0,0,0}$e_1^{'}$}%
}}}}
\put(7122,-13658){\makebox(0,0)[lb]{\smash{{\SetFigFont{10}{12.0}{\rmdefault}{\mddefault}{\updefault}{\color[rgb]{0,0,0}$e_3^{'}$}%
}}}}
\put(6076,-12961){\makebox(0,0)[lb]{\smash{{\SetFigFont{7}{8.4}{\rmdefault}{\mddefault}{\updefault}{\color[rgb]{0,0,0}$x_5$}%
}}}}
\put(6526,-12961){\makebox(0,0)[lb]{\smash{{\SetFigFont{7}{8.4}{\rmdefault}{\mddefault}{\updefault}{\color[rgb]{0,0,0}$x_4$}%
}}}}
\put(6976,-12961){\makebox(0,0)[lb]{\smash{{\SetFigFont{7}{8.4}{\rmdefault}{\mddefault}{\updefault}{\color[rgb]{0,0,0}$x_3$}%
}}}}
\put(7426,-12961){\makebox(0,0)[lb]{\smash{{\SetFigFont{7}{8.4}{\rmdefault}{\mddefault}{\updefault}{\color[rgb]{0,0,0}$x_2$}%
}}}}
\put(7876,-12961){\makebox(0,0)[lb]{\smash{{\SetFigFont{7}{8.4}{\rmdefault}{\mddefault}{\updefault}{\color[rgb]{0,0,0}$x_1$}%
}}}}
\put(5520,-12961){\makebox(0,0)[lb]{\smash{{\SetFigFont{7}{8.4}{\rmdefault}{\mddefault}{\updefault}{\color[rgb]{0,0,0}$x_{k-2}$}%
}}}}
\end{picture}%
}%
        \caption{Edges affected by a switching (a) before and (b) after.}
        \label{fig:1}
\end{figure}

For this we define an operation which decreases the number of green loops  one at a time. Two sequences $\y \in \gc_l$, $\z \in \gc_{l-1}$ are said to be \emph{switchable}, if $\z$ can be obtained from $\y$ by the following operation, called a \emph{switching}, which is a generalization (to $k \ge 3$) of a {switching} defined by McKay and Wormald~\cite{MW90a} for $k = 2$. Among the edges of $\y$, choose a loop $f$ and an ordered pair $(e_1, e_2)$ of green proper edges (see Figure \ref{fig:1}a).
 Putting $s = |e_1 \cap e_2|$ and ignoring the order of the vertices inside the edges, one can write
    \begin{equation*}\label{eq:fee}
    f = vvx_1\dots x_{k-2}, \quad e_1 = w_1\dots w_s y_1\dots y_{k-s}, \quad e_2 = w_1\dots w_s z_1 \dots z_{k-s}.
  \end{equation*}
  Loop $f$ contains two copies of $v$, the left one and the right one (with respect to their order in the sequence $\y$). Select vertices $y_* \in \left\{ y_1, \dots, y_{k-s} \right\}$ and $z_* \in \{z_1, \dots, z_{k-s}\}$, and swap $y_*$ with the left copy of $v$ and $z_*$ with the right one. The effect of switching is that $f, e_1$, and $e_2$ are replaced by three proper edges (see Figure \ref{fig:1}b):
  \begin{equation*}
    e'_1 = e_1 \cup \{v\} - \{y_*\}, \qquad e'_2 = e_2 \cup \{v\} - \{z_*\}, \qquad e'_3 = f \cup \{y_*,z_*\}- \{v,v\}.
  \end{equation*}

  A \emph{backward switching} is the reverse operation that reconstructs $\y \in \gc_l$ from $\z \in \gc_{l-1}$. It is performed by choosing a vertex $v$, an ordered pair of green proper edges $e'_1, e'_2$ containing $v$, one more green proper edge $e_3'$, choosing a pair of vertices $y, z \in e_3'$, and swapping $y$ with the copy of $v$ in $e_1$ and $z$ with the one in $e_2$.

  Note that, given $f, e_1, e_2$, not every choice of $y_*, z_*$ defines a forward switching, due to possible creation of new loops or multiple edges. We say that the choices of $y_*, z_*$ which do define a switching are \emph{admissible}.
  Similarly a choice of $y, z$ is \emph{admissible} with respect to $v, e_1', e_2'$, and $e_3'$ if it defines a backward switching.

Given $\y \in \gc_l$, let $F(\y)$ and $B(\y)$ be the number of ways to perform forward switching and backward switching, respectively.

Let $\sw$ denote a (random) operation which, given $\y \in \gc_l$, applies to it a forward switching, chosen uniformly at random from the $F(\y)$ possibilities. Let $\Y'' \in \gc_0$ be the sequence obtained from $\Y'$ by applying $\sw$ until there are no loops left, namely, $\lambda(\Y')$ times. Suppose for a moment that for every $l$ and $\y \in \gc_l$ functions $F(\y)$ and $B(\y)$ depend on $l$, but not on the actual choice of $\y$. If this were true, then, as one could easily show, $\Y''$ would be uniform over $\gc_0$. As we will see, we are not far from this idealized setting, because Proposition~\ref{prop:FB}(a) below implies that $F(\y)$ is essentially proportional to $l = \lambda(\y)$. On the other hand, Proposition \ref{prop:FB}(b) shows that $B(\y)$ depends on a more complicated parameter of $\y$, namely on  $\fee(\y)$ defined in Section \ref{sec:prelim}.

  To make $B(\y)$ essentially independent of $\y$, we will apply switchings not to every element of $\gc_0 \cup \dots \cup \gc_L$, but to a slightly smaller subfamily. Let
$$\tilde \gc_l := \gc_l \cap \tilde \Sc, \qquad l = 0, \dots, L,$$
where $\tilde \Sc$ has been defined in the previous section.

  We condition on $\Y' \in \tilde \Sc$ and deterministically map $\Y''$ to a simple $k$-graph
  \begin{equation*}
    \THnd := \hyp(\Y'').
  \end{equation*}
  Note that switching does not affect the green degrees, and thus does not change the value of $\fee$. Therefore, if one applies a forward or backward switching to a sequence $\y \in \tilde \Sc$, the resulting sequence is also in $\tilde \Sc$.  Moreover, Proposition \ref{prop:probswap}   shows that by restricting $\Y'$ to $\tilde \Sc$, we do not exclude many sequences.

  The following proposition quantifies the amount by which a single application of $\sw$ distorts the uniformity of $\Y'$.
  \begin{proposition}\label{prop:FB}
  If $1 \le d = o(n^{1/2})$, then
  \begin{itemize}
    \item[$\mathrm {(a)}$] for $\y \in \gc_l, 0 < l \le L$,
      \begin{equation*}
	k^2l(M - rm)^2 \left(1 - O\left(\frac{L + d^2}{M}\right)\right) \le F(\y) \le k^2l(M-rm)^2,
      \end{equation*}
    \item[$\mathrm {(b)}$]  for $\y \in \gc_l$, $0 \le 1 < L$,
      \begin{multline*}
	\binom k 2 (\fee(\y) - 2kLd)(M - rm)\left(1 - O\left(\frac{L + d^2}{M} \right)\right) \\
	\leq B(\y) \leq \binom k 2 \fee(\y)(M - rm).
      \end{multline*}
    \item[$\mathrm {(b')}$] for $\y \in \tilde \gc_l$, $0 \le 1 < L$
      \begin{multline*}
	\binom k 2 \E\fee(\Y)(M - rm)\left(1 - O\left(\frac{n^{3/4}d}{\E \fee(\Y)} + \frac{L + d^2}{M} \right) \right) \\
	\leq B(\y)\leq \binom k 2 \E\fee(\Y)(M - rm)\left(1 + O\left(\frac{n^{3/4}d}{\E \fee(\Y)}\right)\right).
      \end{multline*}
  \end{itemize}
\end{proposition}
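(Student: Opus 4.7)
The plan is to compute $F(\y)$ and $B(\y)$ by direct enumeration of the tuples of data that specify a switching. The upper bounds fall out by dropping admissibility and other restrictions, while the lower bounds are obtained by bounding the number of non-admissible tuples, which will contribute only a factor $O((L+d^2)/M)$ relative to the main term. For part (a), a forward switching from $\y \in \gc_l$ is specified by data $(f, e_1, e_2, y_*, z_*)$ where $f$ is one of the $l$ loops of $\y$, $(e_1, e_2)$ is an ordered pair of distinct green proper edges, and $(y_*, z_*)$ ranges over at most $(k-s)^2 \le k^2$ choices. Since there are fewer than $M - rm$ green proper edges, dropping admissibility immediately yields $F(\y) \le k^2 l (M-rm)^2$. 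For the lower bound I subtract: (i) tuples in which one of $e_1, e_2$ is a green loop, of which there are $O(lLM)$, giving relative error $O(L/M)$; and (ii) admissibility failures arising from the loop-vertex $v$ of $f$ lying in $e_1$ or $e_2$, or from the swap producing a multi-edge. Each such failure forces at least one of the edges involved to pass through a specific vertex, and a vertex-degree argument bounds the number of such tuples by $O(ld^2 M)$, giving the $O(d^2/M)$ contribution.

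For part (b), a backward switching from $\y \in \gc_l$ is specified by data $(v, e_1', e_2', e_3', \{y,z\})$ with $v \in [n]$, $(e_1', e_2')$ an ordered pair of distinct green proper edges containing $v$, $e_3'$ a further green proper edge, and $\{y,z\}$ an unordered pair of distinct vertices of $e_3'$. A natural over-estimate for the number of ordered pairs of distinct edges through $v$ is the ordered position-pair count $(\degb(\y;v))_2$; multiplying by $(M-rm)$ for $e_3'$ and $\binom{k}{2}$ for $\{y,z\}$ and summing over $v$ gives the upper bound $B(\y) \le \binom{k}{2}\fee(\y)(M-rm)$. For the lower bound I account for two sources of loss. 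First, $(\degb(\y;v))_2$ over-counts the true edge-pair count because of green loops at $v$; by Proposition~\ref{prop:YE}(ii)(iii) each of the at most $L$ green loops of $\y$ has a unique vertex of multiplicity $2$ and touches at most $k-1$ other vertices, so its contribution to the discrepancy (including the requirement that $e_1', e_2'$ be proper rather than loop edges) is $O(kd)$, aggregating to the correction $-2kLd$ inside $\fee(\y)$. Second, non-admissible tuples---those in which $e_3'$ equals $e_1'$ or $e_2'$, contains $v$, or whose swap introduces a new loop or multi-edge---are controlled exactly as in (a) by a vertex-degree argument, yielding the $1 - O((L+d^2)/M)$ factor. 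Part (b') is then immediate: for $\y \in \tilde \gc_l \subseteq \tilde \Sc$ we have $\fee(\y) = \E\fee(\Y)\bigl(1 + O(n^{3/4}d/\E\fee(\Y))\bigr)$, and since $2kLd = O(n^{1/4}d^{3/2}) = o(n^{3/4}d)$ under $d = o(n^{1/2})$, the $-2kLd$ correction is absorbed into the $O(n^{3/4}d/\E\fee(\Y))$ factor, yielding the claimed two-sided bound.

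The main obstacle is the careful accounting in part (b) of the over-counting of edge-pairs by position-pairs: this is where the structural information about $\ec$ supplied by Proposition~\ref{prop:YE} is essential and where the $2kLd$ correction is born. Everything else is routine combinatorial bookkeeping, since in both (a) and (b) every non-admissible configuration forces at least one of the participating edges to contain a specific vertex, and the degree bound $d$ together with $\lambda(\y) \le L$ keep the total loss at the claimed level.
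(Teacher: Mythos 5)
Your proposal is correct and follows essentially the same route as the paper's proof: upper bounds by dropping admissibility, lower bounds by showing all but an $O\bigl((L+d^2)/M\bigr)$ fraction of tuples admit the full $k^2$ (resp.\ $\binom k 2$) choices via degree-based counting of bad configurations, the $-2kLd$ correction coming from green loops inflating $(\degb(\y;v))_2$ over the proper-edge pair count, and (b$'$) read off from the definition of $\tilde\Sc$ since $Ld=o(n^{3/4}d)$. The only cosmetic difference is that the paper packages the lower-bound argument through a sufficient ``distant edges'' condition and counts good triples sequentially, whereas you subtract non-admissible tuples directly; the underlying estimates are the same.
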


Finally, we need to show that the final step of the procedure, that is, the mapping of $\Y''$ to $\hyp(\Y'')$ has negligible influence on the uniformity of the distribution. For this, set
\begin{equation*}
  P_H := |\hyp^{-1}(H) \cap \tilde \gc_0| = \left|\left\{ \y \in \tilde \gc_0 : \hyp(\y) = H \right\}\right|, \qquad H \in \hcnd.
\end{equation*}
   \begin{proposition}\label{prop:fminmax}
     If $d = o(n^{1/2})$, then uniformly for every $H \in \hcnd$
  \begin{equation*}
    (1- o(1))M!(k!)^M \le P_H \le M!(k!)^M .
  \end{equation*}
\end{proposition}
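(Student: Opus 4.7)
The upper bound is immediate: every $\y \in \hyp^{-1}(H)$ corresponds to an ordering of the $M$ edges of $H$ (in $M!$ ways) together with an ordering of the $k$ (distinct) vertices inside each edge ($k!$ ways per edge), so $|\hyp^{-1}(H)| = M!(k!)^M$. For the lower bound, note that since $H \in \hcnd$ is a simple $k$-graph, every $\y \in \hyp^{-1}(H)$ has no loops and no multiple edges, whence $\hyp^{-1}(H) \subset \ec_0 = \gc_0$ and thus $P_H = |\hyp^{-1}(H) \cap \tilde{\Sc}|$. Letting $\Y_H$ denote a sequence drawn uniformly from $\hyp^{-1}(H)$, the task reduces to showing that $\prob{\Y_H \notin \tilde{\Sc}} = o(1)$ uniformly in $H \in \hcnd$.

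The crucial observation is that $\fee(\Y_H)$ depends on $\Y_H$ only through the set of edges of $H$ occupying positions $rm+1, \dots, M$ in the ordering; neither the internal order of these green edges nor the orderings of vertices within edges have any effect. Writing $\pi$ for the induced uniform random permutation of the $M$ edges of $H$ and setting $f(\pi) := \fee(\Y_H)$, I plan to apply the permutation concentration inequality \eqref{eq:permconc} with $N = M$, in direct analogy with the derivation of \eqref{eq:feemart}. Swapping two positions of $\pi$ alters the green/red status of at most two edges, thereby changing the green degree of at most $2k$ vertices by at most $1$ each; since $(r)_2 - (r-1)_2 \le 2d$ for every $r \le d$, the corresponding Lipschitz constant is $b = O(kd)$. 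Taking $t := n^{3/4}d/2$, the inequality gives
\begin{equation*}
\prob{|f(\pi) - \E f(\pi)| \ge t} \le 2\exp\left(-\frac{2t^2}{b^2 M}\right) = 2\exp\left(-\Omega\left(\frac{n^{1/2}}{d}\right)\right),
\end{equation*}
which is $o(1)$ because $M = nd/k$ and $d = o(n^{1/2})$.

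It remains to reconcile the conditional mean $\E f(\pi)$ with the unconditional $\E \fee(\Y)$ that appears in the definition of $\tilde{\Sc}$. Since $\degb(\Y_H;v)$ is hypergeometric with parameters $(M,d,M-rm)$, a direct calculation gives $\E f(\pi) = n(d)_2(M-rm)_2/(M)_2$, while \eqref{eq:feemean} reads $\E \fee(\Y) = n(d)_2(kM - rkm)_2/(kM)_2$. Both expressions equal $(1+O(1/M))\cdot nd^2\left((r+1)/(2r+1)\right)^2$, so their difference is $O(nd^2/M) = O(d) = o(n^{3/4}d)$. Combining this with the concentration bound via the triangle inequality yields the claim, uniformly in $H$ (note that $\E f(\pi)$ and the Lipschitz constant do not depend on $H$ at all, thanks to $d$-regularity). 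The only mildly delicate point is this final reconciliation of conditional and unconditional expectations, but the gap turns out to be of strictly lower order than the window $n^{3/4}d$ built into the definition of $\tilde{\Sc}$, so no sharper control is needed.
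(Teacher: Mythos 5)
Your proposal is correct and follows essentially the same route as the paper: the trivial upper bound $|\hyp^{-1}(H)|=M!(k!)^M$, and for the lower bound the observation that $\fee(\Y|_H)$ is a function of the uniform random permutation of the $M$ edges, to which \eqref{eq:permconc} is applied with $N=M$ and Lipschitz constant $O(d)$, after noting that $\E\fee(\Y|_H)=n(d)_2(M-rm)_2/(M)_2$ differs from $\E\fee(\Y)$ by only $O(d)=o(n^{3/4}d)$. The paper's proof is the same argument, with the same exponent $\Theta(n^{1/2}/d)\to\infty$ under $d=o(n^{1/2})$.
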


\noindent Proofs of Propositions \ref{prop:FB} and \ref{prop:fminmax} can be found in Section \ref{rp}.

\begin{lemma}
  \label{lem:almostuniform}
  There is a sequence $\eps_n = o(1)$ such that for every $H \in \hcnd$
  \begin{equation*}
    \prob{\THnd = H} = (1 \pm \eps_n)|\hcnd|^{-1}.
  \end{equation*}
\end{lemma}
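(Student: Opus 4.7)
The plan is to demonstrate that $\Y''$ is $(1+o(1))$-uniform on $\tilde \gc_0$ via an inductive analysis of the switching chain, and then to transfer this to $\THnd = \hyp(\Y'')$ using Proposition \ref{prop:fminmax}. First I would record the setup: by Proposition \ref{prop:YE}, $\Y \in \ec$ a.a.s., after which Proposition \ref{prop:unifG} makes $\Y'$ uniform on each $\gc_l$. Conditioning further on the a.a.s.\ event $\{\Y' \in \tilde \Sc\}$ (Proposition \ref{prop:probswap}), the restriction is uniform on each $\tilde \gc_l = \gc_l \cap \tilde \Sc$. Since each switching preserves $\fee$ and reduces $\lambda$ by exactly one, the process stays inside $\tilde \Sc$ and, after $l = \lambda(\Y') \le L$ steps, lands in $\tilde \gc_0$.

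The heart of the argument is one switching step. Writing $F(\y,\z)$ for the number of switchings from $\y$ to $\z$, one has $\sum_{\y} F(\y,\z) = B(\z)$ and
\begin{equation*}
  \prob{\Y^{(j+1)} = \z} = \sum_{\y} \prob{\Y^{(j)} = \y} \cdot \frac{F(\y,\z)}{F(\y)}.
\end{equation*}
Proposition \ref{prop:FB}(a) gives $F(\y) = (1 \pm o(1))k^2(l-j)(M-rm)^2$ uniformly over $\y \in \tilde \gc_{l-j}$, and Proposition \ref{prop:FB}(b') gives $B(\z) = (1 \pm o(1))\binom{k}{2}\E \fee(\Y)(M-rm)$ uniformly over $\z \in \tilde \gc_{l-j-1}$. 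Combining these, if $\Y^{(j)}$ is $(1+\delta_j)$-uniform on $\tilde \gc_{l-j}$, then $\Y^{(j+1)}$ is $(1+\delta_{j+1})$-uniform on $\tilde \gc_{l-j-1}$ with $\delta_{j+1}-\delta_j = O\!\left(L/M + d^2/M + 1/(n^{1/4}d)\right)$.

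Iterating up to $L$ times, the total drift is $O\!\left(L^2/M + L d^2/M + L/(n^{1/4}d)\right)$, which simplifies to $O\!\left(n^{-1/2} + d^{1/2}/n^{3/4} + d^{-1/2}\right)$ and is $o(1)$ under $d \ge C\log n$ and $d = o(n^{1/2})$. Hence, conditioned on $\lambda(\Y') = l \le L$, the variable $\Y''$ is $(1+o(1))$-uniform on $\tilde \gc_0$, and averaging over $l$ the same uniformity holds unconditionally on $\lambda$. The main obstacle is precisely controlling this cumulative multiplicative error: it is the reason for cutting down from $\gc_l$ to $\tilde \gc_l$ in the previous section and for invoking the sharper Proposition \ref{prop:FB}(b') rather than (b).

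For the final step, for every $H \in \hcnd$,
\begin{equation*}
  \prob{\THnd = H} = \sum_{\y \in \hyp^{-1}(H) \cap \tilde \gc_0} \prob{\Y'' = \y} = (1 \pm o(1)) \frac{P_H}{|\tilde \gc_0|}.
\end{equation*}
Proposition \ref{prop:fminmax} gives $P_H = (1 \pm o(1))M!(k!)^M$ uniformly in $H$; summing over $H$ yields $|\tilde \gc_0| = (1 \pm o(1))|\hcnd|\,M!(k!)^M$, and substituting back produces $\prob{\THnd = H} = (1 \pm \eps_n)|\hcnd|^{-1}$ with $\eps_n = o(1)$ uniform in $H$, completing the proof.
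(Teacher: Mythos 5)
Your argument is correct and, in substance, the same as the paper's: both rest on Propositions \ref{prop:unifG}, \ref{prop:probswap}, \ref{prop:FB}(a),(b$'$) and \ref{prop:fminmax}, together with the fact that $B(\z)$ counts exactly the forward switchings arriving at $\z$, i.e.\ $B(\z)=\sum_{\y}F(\y,\z)$. The only organizational difference is that you propagate near-uniformity one switching at a time and then obtain the normalization from $|\tilde\gc_0|=(1+o(1))|\hcnd|\,M!(k!)^M$ (by summing $P_H$ over $H$), whereas the paper unrolls the very same estimates into a trajectory count: each trajectory from $\tilde\gc_l$ has probability $(1+o(1))|\tilde\gc_l|^{-1}\prod_i F_i^{-1}$, the number of trajectories ending at a given $H$ is $(1+o(1))P_H B^l$, and the factor $|\hcnd|^{-1}$ comes out implicitly by summing over $H$; these are the same computation. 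Two small points to tighten. First, state the induction hypothesis as ``$\prob{\Y^{(j)}=\y}$ lies within a factor $1\pm\delta_j$ of a common constant'' (equivalently, bound the max/min ratio), normalizing only once at the very end: if instead you renormalize to $1/|\tilde\gc_{l-j}|$ after every step, the worst-case recursion is $\delta_{j+1}\approx 2\delta_j+O(\eps)$, which would compound over up to $L$ steps; the additive accumulation you claim is exactly what the non-normalized (ratio) form gives, and it matches the paper's $(1+O(\cdot))^{l}$ bookkeeping. Second, $Ld^2/M=O(d^{3/2}/n^{3/4})$, not $O(d^{1/2}/n^{3/4})$; this is still $o(1)$ since $d=o(n^{1/2})$, so the conclusion is unaffected.
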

\begin{proof}
 % Note that $L = n^{1/4}d^{1/2}$ is a function of $n$.
Clearly it is enough to show that for some function $p = p(n,l)$ we have
  \begin{equation}\label{eq:Hcond}
  \Pc{\THnd = H}{\Y' \in \tilde \gc_l} = (1 + o(1))p(n,l)
\end{equation}
uniformly for $l \le L$ and $H \in \hcnd$.
Indeed,
  \begin{equation*}
    \prob{\THnd = H} = \sum_{l = 0}^L \Pc{\THnd = H}{\Y' \in \tilde \gc_l}\prob{\Y' \in \tilde \gc_l} = (1 + o(1))p(n),
  \end{equation*}
  where $p(n) := \sum_l p(n,l) \Prob(\Y' \in \tilde \gc_l)$ is independent of $H$.

  Let $F_l = k^2l(M-rm)^2$ and $B = \binom k 2 \E \fee(\Y) (M-rm)$ be the asymptotic values of the bounds in Proposition \ref{prop:FB}, (a) and (b'), respectively.

  By Proposition \ref{prop:unifG}, we can treat $\Y'$ as a uniformly chosen element of $\tilde \gc_l = \gc_l \cap \tilde \Sc$. Every realization of $l$ switchings that generate $\Y''$ produces a \emph{trajectory}
  \begin{equation*}
    (\y^{(l)},  \dots, \y^{(0)}) \in \tilde \gc_l \times \dots \times \tilde \gc_0,
  \end{equation*}
  where $\y^{(k)}$ is switchable with $\y^{(k-1)}$ for $k = 1, \dots, l$. The probability that a particular such trajectory occurs is
  \begin{multline}\label{eq:trajprob}
    \frac{1}{|\tilde \gc_l|F(\y^{(l)}) \dots F(\y^{(1)})} =
    \left( 1 + O\left( \frac{L + d^2}{M} \right) \right)^l |\tilde \gc_l|^{-1} \prod_{i=1}^l F_i^{-1} \\
    = (1+o(1)) |\tilde \gc_l|^{-1} \prod_{i=1}^l F_i^{-1},
  \end{multline}
  the first equality following from Proposition \ref{prop:FB}.

  On the other hand, by Propositions \ref{prop:FB} and \ref{prop:fminmax} the number of trajectories that lead to a particular $H \in \hcnd$ is
  \begin{equation}\label{eq:trajcount}
    P_H B^l \left(1+ O\left(\frac{n^{3/4}d}{\E \fee(\Y)} + \frac{L + d^2}{M}\right)\right)^l= (1 + o(1)) M! (k!)^M B^l,
  \end{equation}
  because $\E \fee(\Y) = \Theta(nd^2)$ by \eqref{eq:feemean}.
  Now the estimate \eqref{eq:Hcond} with
  \begin{equation*}
    p(n,l) = M!(k!)^M B^l |\tilde \gc_l|^{-1}\prod_{i=1}^l F_i^{-1}
  \end{equation*}
  follows by multiplication of \eqref{eq:trajprob} and \eqref{eq:trajcount}.
\end{proof}

\begin{proof}[Proof of Theorem \ref{thm:main}]
  Let $\mu$ be a uniform distribution over $\hcnd$ and $\nu$ be the distribution of $\THnd$, that is
  \begin{equation*}
    \mu(H) = |\hcnd|^{-1}, \qquad \nu(H) = \prob{\THnd = H}, \qquad H \in \hcnd.
  \end{equation*}
  By Lemma \ref{lem:almostuniform} the \emph{total variation distance} between the measures $\mu$ and $\nu$ is
  \begin{equation*}
    \tvd(\mu,\nu) := \frac 1 2 \sum_{H \in \hcnd} |\mu(H) - \nu(H)| \le \frac 1 2 \sum_H \eps_n \mu(H) = o(1).
  \end{equation*}
  Therefore a standard fact from probability theory (see, e.g., \cite[p. 254]{BHJ}) implies that there is a joint distribution of $\THnd$ and $\Hnd$ such that
  \begin{equation}\label{eq:hhH}
    \THnd = \Hnd \qquad \text{a.a.s.\ }
  \end{equation}
  By definition of $\THnd$, if $\Hnm \subset \hyp(\Y_{\red})$, then $\Hnm \subset \THnd$. Therefore, Theorem \ref{thm:main} follows by Lemma \ref{lem:embed} and Proposition \ref{prop:probswap}.
\end{proof}

\section{Remaining proofs}\label{rp}

\begin{proof}[Proof of Proposition \ref{prop:FB}]
  (a) The upper bound follows from the fact that after we choose (in one of at most $l(M-rm)^2$ ways) a loop and two green edges, we have at most $k^2$ admissible choices of vertices $y_*$ and $z_*$.

  We say that two edges $e', e''$ of a $k$-graph are \emph{distant} from each other if they do not intersect and there is no third edge $e'''$ that intersects both $e'$ and $e''$. Note that for any edge $e$ there are at most $k^2d^2$ edges \emph{not} distant from $e$.

  For the lower bound, let us estimate the number of triples $(f,e_1,e_2)$ for which we have exactly $k^2$ admissible choices of $y_*, z_*$. For this it is sufficient that $e_1 \cap e_2 = \emptyset$ and both $e_1, e_2$ are distant from $f$ in $\hyp(\y)$. Given $f$, we can choose such $e_1$ in at least $M - rm - l-k^2d^2 = (M- rm)(1 - O\left(  (L + d^2)/M\right))$ ways and then choose such $e_2$ in at least $M-rm - l - k^2d^2 - kd = (M-rm)(1-O\left( (L+d^2)/M \right))$ ways. Hence the lower bound.

  (b) We can choose a vertex $v \in [n]$ and an ordered pair of edges $e_1', e_2'$ containing $v$ in at most $\fee(\y)$ ways and then choose $e_3'$ in at most $M - rm$ ways. Number of admissible choices of vertices $y, z \in e_3'$ is at most $\binom k 2$, which gives the upper bound.

  For the lower bound, we estimate the number of quadruples $v, e_1'$, $e_2', e_3'$ for which there are exactly $\binom k 2$ admissible choices of $y,z$. For this it is sufficient that $e_3'$ is distant from both $e_1'$ and $e_2'$ in $\hyp(\y)$. The number of ways to choose $v, e_1', e_2'$ is exactly
  \begin{equation}\label{eq:sumdegs}
    \sum_{v \in [n]} \left( \degb'(\y;v) \right)_2,
  \end{equation}
  where $\degb'(\y;v)$ is the number of green \emph{proper} edges containing vertex $v$. It is obvious that \eqref{eq:sumdegs} is at most $\fee(\y)$ and, as one can easily see, at least $\fee(\y) - 2kLd$. The lower bound now follows, since, given $v, e_1', e_2'$, we can choose $e_3'$ in at least $M - rm - l - 4k^2d^2 = (M-rm)(1 - O\left( (L + d^2)/M \right)$ ways.

  (b$'$) Immediate from (b) and the definition of $\tilde \gc_l$.
\end{proof}

\begin{proof}[Proof of Proposition \ref{prop:fminmax}]
  The upper bound is just $|\hyp^{-1}(H)|$. For the lower bound, we let $\Y|_H$ be a sequence chosen uniformly at random from  $\hyp^{-1}(H)$ and show that the probabilities
  \begin{equation*}
    \prob{|\fee(\Y|_H) - \E \fee(\Y)| > n^{3/4}d}, \qquad H \in \hcnd,
  \end{equation*}
  uniformly tend to zero.
  Since $\fee$ does not depend on the order of vertices inside the edges of $\Y$, we can treat $\Y|_H$ as a random permutation of the $M$ edges of $H$, which we denote by $e_1, \dots, e_M$. Since $H$ is simple, we have
  \begin{equation*}
    \fee(\Y|_H) = \sum_{v \in [n]} \sum_{\substack{e_i, e_j \ni v \\ i \neq j}} \I_{\{e_i, e_j \text{ are green in } \Y|_H \}},
  \end{equation*}
  whence
  \begin{equation*}
    \E \fee \left( \Y|_H \right) = n(d)_2 \frac{(M-rm)_2}{(M)_2}.
  \end{equation*}
  Therefore  \eqref{eq:feemean} and simple calculations yield
  \begin{equation*}
    \E\fee(\Y|_H) - \E \fee(\Y) = O(nd^2M^{-1}) = O(d).
  \end{equation*}
  Further, if $\y,\z \in \hyp^{-1}(H)$ and $\z$ can be obtained from $\y$ by swapping two edges, then
  \begin{equation*}
    |\fee(\y) - \fee(\z)| = O(d),
  \end{equation*}
  uniformly for all such $\y$ and $\z$.
  Therefore \eqref{eq:permconc} applies to $f = \fee$ with $N = M$ and $b = O(d)$. To sum up,
  \begin{multline*}
    \prob{|\fee(\Y|_H) - \E \fee(\Y)| > n^{3/4}d }
    \le \prob{|\fee(\Y|_H) - \E \fee(\Y|_H)| > n^{3/4}d - O(d)} \\
    \le 2 \exp \left\{ - \frac{\left( n^{3/4}d - O(d) \right)^2}{O\left( Md^2 \right)} \right\}= o(1),
  \end{multline*}
  the equality following from the assumption $d = o(n^{1/2})$.
\end{proof}

 \section{Concluding Remarks}
  \begin{remark}
    Theorem \ref{thm:main} is closely related to a result of Kim and Vu \cite{KV04}, who proved, for~$d$ growing faster than $\log n$ but slower than $n^{1/3}/\log^2 n$, that there is a joint distribution of $\hh^{(2)}(n,p)$ and $\hh^{(2)}(n,d)$ with $p$ satisfying $p \sim d/n$ so that
    \begin{equation}\label{eq:KV}
      \hh^{(2)}(n,p) \subset \hh^{(2)}(n,d) \qquad \text{a.a.s.\ }
    \end{equation}
    It is known (see, e.g., \cite{B01}) that $\hh^{(2)}(n,p)$ is a.a.s.\  Hamiltonian, when the expected degree $(n-1)p$ grows faster than $\log n$. Therefore \eqref{eq:KV} implies an analogue of Corollary~\ref{cor:ham} for graphs.
  \end{remark}

 \begin{remark}
   In \cite{DFRS} the authors used the same switching as in the present paper to count $d$-regular $k$-graphs approximately for $k \ge3$ and $1 \le d = o(n^{1/2})$ as well as for $k \ge 4$ and $d = o(n)$. The application of the technique is somewhat easier there, because there is no need to preserve the red edges. The restriction $d = o(n^{1/2})$ that appears in Theorem \ref{thm:main} has also a natural meaning in \cite{DFRS}, since the counting formula there gives the asymptotics of the probability $p_{n,d} := \Prob(\PHnd \text{ is simple})$ for $d = o(n^{1/2})$, while for $k \ge 4$ and $n^{1/2} \le d = o(n)$ it just gives the asymptotics of $\log p_{n,d}$.
 \end{remark}
 \begin{remark}\label{rem:lower}
   The lower bound on $d$ in Theorem \ref{thm:main} is necessary because the second moment method applied to $\Hnp$ (cf. Theorem 3.1(ii) in \cite{B80}) and asymptotic equivalence of $\Hnp$ and $\Hnm$ yields that for $d = o(\log n)$ and $m \sim cM$ there is a sequence $\Delta = \Delta(n)$ such that $d = o(\Delta)$ and the maximum degree $\Hnm$ is at least $\Delta$ a.a.s.\
 \end{remark}
 \begin{remark}
   For $d$ greater than $\log n$, however, the degree sequence of $\Hnp$ is closely concentrated around the expected degree. Therefore it is plausible that Theorem \ref{thm:main} can be extended to $d$ greater than $n^{1/2}$. However, $n^{1/2}$ seems to be an obstacle which cannot be overcome without a proper refinement of our proof.
 \end{remark}
 \begin{remark}
  In view of Remark \ref{rem:lower}, our approach cannot be extended to $d = O(\log n)$. Nevertheless, we believe that the following extension of Corollary~\ref{cor:ham} is valid.
  \begin{conjecture}
    For every $k\ge 3$ there is a constant $d_0=d_0(k)$ such that for any $d \ge d_0$,
    \begin{equation*}
      \hh^{(k)}(n,d) \text{ contains a loose Hamilton cycle a.a.s.}
    \end{equation*}
  \end{conjecture}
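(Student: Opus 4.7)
The plan is to attack the conjecture via the small subgraph conditioning method applied in the configuration model, generalizing the Robinson--Wormald argument that establishes hamiltonicity of $\hh^{(2)}(n,d)$ for every fixed $d\ge 3$. For constant $d$, Proposition~\ref{prop:YE} no longer forces $\PHnd$ to be a.a.s.\ simple, but the asymptotics computed in \cite{DFRS} show that the simplicity probability is bounded away from zero, so any property that holds a.a.s.\ in $\PHnd$ conditional on simplicity transfers to $\Hnd$. We therefore work throughout in $\PHnd$.

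Step 1 is a first moment computation for $X_n$, the number of loose Hamilton cycles in $\PHnd$. A labeled loose Hamilton cycle is specified by a cyclic order of $[n]$ together with the choice of the linking vertex inside each of the $\ell = n/(k-1)$ edges; for each such skeleton one multiplies by the probability that the $\ell$ prescribed edges appear in a uniform configuration, which is expressible through falling factorials of $nd$. A direct calculation should yield $\E X_n = n^{-\Theta(1)} \bigl(\alpha(k,d)\bigr)^n$ for an explicit $\alpha(k,d)$ with $\alpha(k,d)\to \infty$ as $d\to\infty$; hence $\E X_n \to \infty$ whenever $d \ge d_0(k)$.

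Step 2 applies Janson's small subgraph conditioning theorem. Following the pattern of Robinson--Wormald, one identifies the correct family of ``short structures'' in the configuration that interact non-trivially with the indicator of loose-hamiltonicity; for $k$-graphs these naturally include loops, multi-edge pairs, and short Berge cycles. Letting $Z_j$ count those of combinatorial type $j$, one must verify joint Poisson convergence $Z_j \to \mathrm{Po}(\lambda_j(k,d))$ and
\begin{equation*}
\frac{\E X_n^2}{(\E X_n)^2} \;\longrightarrow\; \prod_{j\ge 2}\exp\bigl(\lambda_j(\delta_j^2-1)\bigr),
\end{equation*}
where $\delta_j = \lim_n \E(X_n Z_j)/(\E X_n\,\E Z_j) - 1$ and $\sum_j \lambda_j \delta_j^2 < \infty$. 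The theorem then delivers $\Prob(X_n > 0) \to 1$, which after conditioning on simplicity proves the conjecture.

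The principal obstacle is the second moment estimate. One must enumerate pairs $(C_1,C_2)$ of loose Hamilton cycles according to the combinatorial type of the intersection $C_1\cap C_2$ inside the configuration. For $k=2$ this enumeration organizes neatly because $C_1 \cup C_2$ is a union of paths and cycles, but for $k\ge 3$ the intersection of two loose Hamilton cycles can decompose into loose subpaths of many different lengths meeting at arbitrary subsets of linking vertices, so each overlap type contributes a distinct exponential factor that has to be controlled. Showing that the aggregate matches the target product $\prod_j \exp(\lambda_j \delta_j^2)$ is the technical heart of the problem, and is presumably why a threshold $d_0(k)$ appears in the conjecture rather than the optimal value $d_0 = k+1$ one might hope for.
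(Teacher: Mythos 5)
The statement you are addressing is not a theorem of the paper at all: it is stated there as an open conjecture, and the authors explicitly explain why no proof is given. Indeed, the route you outline -- generalizing the Robinson--Wormald variance analysis of the number of Hamilton cycles in the configuration model, i.e.\ small subgraph conditioning -- is exactly the approach the paper mentions and then sets aside, remarking that for $k\ge 3$ ``similar computations become extremely complicated and involved.'' Your write-up does not carry out those computations: Step 1 asserts (``a direct calculation should yield'') the exponential first-moment asymptotics $\E X_n = n^{-\Theta(1)}\alpha(k,d)^n$ without performing the calculation or exhibiting $\alpha(k,d)$, and Step 2 merely lists the hypotheses of the small subgraph conditioning theorem (joint Poisson convergence of the short-structure counts $Z_j$, identification of the $\delta_j$, and the matching of $\E X_n^2/(\E X_n)^2$ with $\prod_j \exp(\lambda_j\delta_j^2)$) that ``one must verify.'' You yourself identify the second-moment estimate over overlap types of pairs of loose Hamilton cycles as ``the technical heart of the problem'' and leave it entirely open. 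That heart is the whole difficulty: without an actual enumeration of the intersection patterns of two loose Hamilton cycles in the $k$-uniform configuration model and a proof that the resulting sum of exponential contributions collapses to the required product, nothing has been proved. So what you have is a plausible research program (essentially the one the authors already had in mind and found forbidding), not a proof, and it cannot be credited as settling the conjecture.

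Two smaller points to keep in mind if you pursue this program. First, the reduction to the simple model for \emph{fixed} $d$ cannot lean on the range covered by Theorem~\ref{thm:main} or on the $d\to\infty$ asymptotics used in the paper; you would instead argue via contiguity, with loops and multiple edges included among the conditioning variables $Z_j$, which is standard but must be stated carefully for $k$-uniform configurations. Second, even granting the machinery, you still need to show $\alpha(k,d)>1$ for all $d\ge d_0(k)$, not only that $\alpha(k,d)\to\infty$ as $d\to\infty$ together with monotonicity you have not established; the value of $d_0$ in the conjecture is tied to exactly this first-moment threshold, so the claim ``hence $\E X_n\to\infty$ whenever $d\ge d_0(k)$'' needs its own argument.
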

  \noindent
   Recall that Robinson and Wormald \cite{RW92, RW94} proved for $k=2$ that as far as fixed $d$ is considered, it suffices to take $d \ge 3$. Their approach is based on a very careful analysis of variance of a random variable counting the number of Hamilton cycles in the configuration model. Unfortunately, for $k\ge 3$ similar computations become extremely complicated and involved, discouraging one from taking this approach.
 \end{remark}
 \begin{remark}
In this paper, we were concerned only with loose cycles. One can also consider a more general problem. 
Define an {\em $\ell$-overlapping  cycle} as a $k$-uniform hypergraph in which,  for some cyclic ordering
of its vertices, every edge consists of $k$ consecutive vertices, and every two consecutive edges
(in the natural ordering of the edges induced by the ordering of the vertices) share exactly $\ell$ vertices. (Clearly, $\ell=1$ corresponds to loose cycles.)
The thresholds for the existence of $\ell$-overlapping Hamilton cycles in $\Hnp$ have been recently obtained in~\cite{DF2}. However, proving similar results for $\hh^{(k)}(n,d)$ and arbitrary $\ell\ge 2$ seem to be hard. Based on results from~\cite{DF2} we believe that the following is true.

  \begin{conjecture}
    For every $k > \ell \ge 2$ if $d \gg n^{\ell-1}$, then
    \begin{equation*}
      \hh^{(k)}(n,d) \text{ contains an $\ell$-overlapping Hamilton cycle a.a.s.}
    \end{equation*}
  \end{conjecture}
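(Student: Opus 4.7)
The natural plan is to mimic the two-step strategy of the present paper: first establish an analogue of Theorems~\ref{thm:hamHnm3}--\ref{thm:hamHnmk} for $\ell$-overlapping Hamilton cycles in $\Hnm$, then couple $\Hnd$ with a suitable $\Hnm$ so that $\Hnm\subset\Hnd$ a.a.s., and finally transfer the $\ell$-overlapping cycle from $\Hnm$ to $\Hnd$ as an increasing property.

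For the first step, I would invoke the $\Hnp$-thresholds for $\ell$-overlapping Hamilton cycles from \cite{DF2} and convert them to the uniform model via the standard equivalence of $\Hnp$ and $\Hnm$ (Corollary~1.16 of \cite{JLR}). This should furnish a constant $c_\ell>0$ (perhaps with an extra logarithmic factor) such that $\Hnm$ a.a.s.\ contains an $\ell$-overlapping Hamilton cycle whenever $m\ge c_\ell n^{\ell}$.

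For the second step, the goal is to extend Theorem~\ref{thm:main} to the regime $d\gg n^{\ell-1}$; under the matching $m\sim cnd/k$ this is exactly the density produced in step one. In principle the framework of Sections~\ref{sec:prelim}--\ref{sec:greenloops} is structure-agnostic: the red/green colouring, the swap of red loops with green proper edges, and the switching operation are all indifferent to whether the property ultimately being embedded is a loose or an $\ell$-overlapping Hamilton cycle. Granted such an extension of the coupling, combining the two steps proves the conjecture.

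The main obstacle is precisely this extension past the barrier $d=o(n^{1/2})$ flagged in Remark~4. Once $d\ge n^{1/2}$ the expected number of multiple edges and of double-loops in $\PHnd$ stops being $o(1)$, so Propositions~\ref{prop:XE} and \ref{prop:YE} fail, the good class $\ec$ must be enlarged to accommodate substantially more complex defects, and the switching must be redesigned to eliminate multi-edges in addition to loops; the bookkeeping behind Propositions~\ref{prop:FB} and \ref{prop:fminmax} then deteriorates accordingly. For $\ell=2$ and $k=3$ one already needs $d\gg n$, where $\PHnd$ is nowhere near simple; here a genuinely new ingredient -- possibly a sandwich relating $\Hnd$ to the complement of a sparser regular $k$-graph in the complete $k$-graph, so that the switching is applied in the complementary (and hence sparse) side -- seems necessary in order to keep the error terms under control. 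It is exactly this refinement that the authors have not been able to carry out, which is why the statement remains a conjecture.
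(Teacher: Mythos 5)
This statement is one of the paper's concluding \emph{conjectures}: the paper offers no proof of it, so there is nothing for your argument to coincide with or diverge from. More importantly, what you have written is not a proof but a reduction of the conjecture to another open problem, and you say so yourself. Step one (transferring the $\ell$-overlapping thresholds of \cite{DF2} from $\hh^{(k)}(n,p)$ to $\Hnm$ via the equivalence in \cite{JLR}, then using monotonicity of the property) is fine, and it explains where the exponent $d \gg n^{\ell-1}$ comes from, since $m \sim nd/k$ and the threshold for $\ell \ge 2$ sits at $m = \Theta(n^{\ell})$. But step two --- ``granted such an extension of the coupling'' --- is precisely the content that would have to be proved, and it is far beyond what the machinery of Sections~\ref{sec:prelim}--\ref{sec:greenloops} can deliver. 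For $\ell \ge 2$ one needs $d \gg n^{\ell-1} \ge n$, whereas Theorem~\ref{thm:main} requires $d = o(n^{1/2})$; in the regime $d \ge n^{1/2}$ the expected number of multiple edges in $\X$ and $\Y$ is no longer $o(1)$, so Propositions~\ref{prop:XE} and \ref{prop:YE} fail, $\PHnd$ is not close to simple (indeed for $d \gg n$ the probability of simplicity is extremely small), and the loop-only switching together with the error bookkeeping of Propositions~\ref{prop:FB} and \ref{prop:fminmax} collapses. Your suggested remedy (a sandwich through the complement, a redesigned switching handling multi-edges) is a speculative research programme, not an argument: no lemma is stated, no estimate is carried out, and you concede that ``it is exactly this refinement that the authors have not been able to carry out.''

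So the concrete gap is this: the entire difficulty of the conjecture lives in the coupling (or any other containment/contiguity statement) for $d$ polynomially large in $n$, and your proposal leaves that step entirely unproved. A correct submission would either supply such a coupling with full proofs of the analogues of Propositions~\ref{prop:XE}--\ref{prop:fminmax} in the dense regime, or abandon the sandwiching strategy altogether (e.g.\ a second-moment or absorption argument carried out directly in the configuration model), neither of which is attempted here. As it stands, the statement remains a conjecture, and your text should be presented as a discussion of possible lines of attack rather than as a proof.
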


 \end{remark}
\bibliographystyle{abbrv}
% \bibliography{switching_hypergraphs}

  \end{document}